\let\amsamp=&
\theoremstyle{plain}
\newtheorem{proposition}{Proposition}[section]
\newtheorem{theorem}{Theorem}[section]
\newtheorem{corollary}{Corollary}[section]
\newtheorem{lemma}{Lemma}[section]
\theoremstyle{definition}
\newtheorem{definition}{Definition}[section]
\newtheorem{example}{Example}[section]
\theoremstyle{remark}
\let\fullref\autoref
\let\c@corollary=\c@theorem
\let\c@proposition=\c@theorem
\let\c@lemma=\c@theorem
\let\c@remark=\c@theorem
\let\c@definition=\c@theorem
\let\c@notation=\c@theorem
\let\c@construction=\c@theorem
\let\c@example=\c@theorem
\let\c@equation\c@theorem
\let\c@qyestion\c@theorem
\let\c@figure=\c@figure
\def\makeautorefname#1#2{\expandafter\def\csname#1autorefname\endcsname{#2}}
\newcommand{\wt}{\widetilde}
\newcommand{\ol}{\overline}
\newcommand{\sm}{\setminus}
\newcommand{\Cz}{\mathbb{C}_\omega}
\newcommand{\N}{\mathbb{N}}
\newcommand{\im}{\operatorname{Im}}
\newcommand{\Id}{\operatorname{Id}}
\newcommand{\Ext}{\operatorname{Ext}}
\newcommand{\Tor}{\operatorname{Tor}}
\newcommand{\Hom}{\operatorname{Hom}}
\newcommand{\sgn}{\operatorname{sgn}}
\newcommand{\coker}{\operatorname{coker}}
\renewcommand{\epsilon}{\varepsilon}
\renewcommand{\phi}{\varphi}
\begin{document}
\title{A lower bound for the doubly slice genus from signatures}

\begin{abstract}
The doubly slice genus of a knot in the 3-sphere is the minimal genus among unknotted orientable surfaces in the 4-sphere for which the knot arises as a cross-section. We use the classical signature function of the knot to give a new lower bound for the doubly slice genus. We combine this with an upper bound due to C.~McDonald to prove that for every nonnegative integer~$N$ there is a knot where the difference between the slice and doubly slice genus is exactly~$N$, refining a result of W.~Chen which says this difference can be arbitrarily large.
\end{abstract}

\author[Patrick Orson]{Patrick Orson}
\address{Department of Mathematics, Boston College, USA}
\email{patrick.orson@bc.edu}

\author[Mark Powell]{Mark Powell}
\address{Department of Mathematical Sciences, Durham University, UK}
\email{mark.a.powell@durham.ac.uk}

\begin{center}
\maketitle
  \end{center}

\section{Introduction}

In what follows all manifolds are topological, compact, and oriented, and embeddings are locally flat, although our results also hold in the smooth category.
A basic $4$-dimensional measurement for the complexity of a knot $K\subset S^3$ is the \emph{slice genus} $g_4(K)$, defined as the minimal genus among connected properly embedded surfaces in $D^4$ that have the knot as boundary. Doubling such a surface along its boundary produces a closed connected surface in $S^4$ for which the knot appears as a cross section. This doubled surface will be genus minimising among surfaces in $S^4$ for which the knot appears as a cross section, but will in general be a knotted surface embedding.

A connected surface in $S^4$ is \emph{unknotted} if it bounds an embedded $3$-dimensional handlebody in~$S^4$. Unknotted surfaces with the knot~$K$ as cross section are easily produced by doubling a Seifert surface for~$K$ that has been pushed in to $D^4$. The \emph{doubly slice genus} $g_{ds}(K)$, first defined in~\cite[\textsection 5]{MR3425633}, is the minimal genus among unknotted surfaces in $S^4$ for which the knot arises as a cross-section. Writing $g_3(K)$ for the minimal genus among Seifert surfaces for~$K$, it is immediate from the above discussion that
\[2g_4(K) \leq g_{ds}(K) \leq 2g_3(K).\]
Further comparison of these quantities is fairly subtle, but we will show in this article that classical abelian knot invariants can be employed for this purpose.

A choice of Seifert surface for a knot $K\subset S^3$ and a choice of basis for the first homology gives rise to a Seifert matrix~$V$. Then given $\omega\in S^1 \subset \C$ the \emph{$\omega$-signature} of~$K$ is defined as the signature of the complex hermitian matrix
\[
\sigma_\omega(K):= \sgn\left((1-\omega)V+(1-\omega^{-1})V^T\right).
\]

\begin{theorem}\label{thm:main}
  Let $K$ be a knot in $S^3$. The doubly slice genus of $K$ is at least
  \[
  g_{ds}(K) \geq \max_{\omega \in S^1 \sm \{1\}}|\sigma_\omega(K)|.
  \]
  \end{theorem}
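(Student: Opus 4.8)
The plan is to distill from an unknotted surface realizing $g_{ds}(K)$ a Seifert surface for $K$ whose Seifert form admits two \emph{complementary} metabolizers, and then to read the signature bound off by linear algebra. Write $S^4 = D^4_+ \cup_{S^3} D^4_-$ with $K \subset S^3$, and let $\Sigma \subset S^4$ be an unknotted surface of genus $g = g_{ds}(K)$ with $\Sigma \cap S^3 = K$, bounding a $3$-dimensional handlebody $H$. Set $\Sigma_\pm = \Sigma \cap D^4_\pm$, of genera $g_+, g_-$ with $g_+ + g_- = g$. After a small isotopy making $H$ transverse to $S^3$, the intersection $S := H \cap S^3$ is a Seifert surface for $K$, say of genus $g_S$, and $H$ is cut into two $3$-manifolds $H_\pm = H \cap D^4_\pm$ with $\partial H_\pm = \Sigma_\pm \cup_K S$, a closed surface of genus $g_\pm + g_S$. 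Let $V$ be the Seifert form on $H_1(S)$ and put
\[ P_\pm = \ker\bigl(H_1(S;\mathbb{Q}) \to H_1(H_\pm;\mathbb{Q})\bigr). \]

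The two properties to establish are that each $P_\pm$ is isotropic for $V$ and that $P_+ \cap P_- = 0$. For isotropy, take $\gamma, \delta \in P_+$ bounding surfaces $G, D$ inside the $3$-manifold $H_+ \subset D^4_+$; then $V(\gamma, \delta)$ computes the intersection in $D^4_+$ of $D$ with a pushoff $G^+$ of $G$, and because $H_+$ is only $3$-dimensional I can push $G$ off $H_+$ along its normal direction in $D^4_+$, making $G^+$ disjoint from $D$, whence $V(\gamma, \delta) = 0$. This is the crucial use of the $3$-dimensionality of the handlebody: it forces the \emph{full} Seifert form $V$, not merely the symmetrisation $V + V^T$, to vanish on $P_\pm$, which is exactly what upgrades the argument from $\omega = -1$ to all $\omega$. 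For transversality of the two subspaces, I would run the Mayer--Vietoris sequence for $H = H_+ \cup_S H_-$, in which the connecting homomorphism identifies $P_+ \cap P_-$ with the image of $H_2(H;\mathbb{Q})$; since $H$ is a handlebody, $H_2(H) = 0$, so $P_+ \cap P_- = 0$. This is precisely where unknottedness enters. Finally, the half-lives-half-dies principle for the $3$-manifolds $H_\pm$ bounds $\operatorname{rank}\bigl(H_1(S) \to H_1(H_\pm)\bigr) \le g_\pm + g_S$, giving $\dim P_+ + \dim P_- \ge 2g_S - g$.

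To conclude, fix $\omega \in S^1 \sm \{1\}$ and consider the Hermitian form $A_\omega = (1-\omega)V + (1-\omega^{-1})V^T$, whose signature is $\sigma_\omega(K)$. Since $V$ vanishes on each $P_\pm$, so does $A_\omega$, so on $W = P_+ \oplus P_-$ (a direct sum by the previous step) the form $A_\omega$ has the block-antidiagonal shape $\left(\begin{smallmatrix} 0 & B \\ B^{*} & 0 \end{smallmatrix}\right)$, whose signature is $0$ irrespective of any degeneracy. As $W$ has codimension at most $g$ in $H_1(S;\mathbb{C})$, eigenvalue interlacing gives $|\sigma_\omega(K)| = |\sigma_\omega(K) - \sigma(A_\omega|_W)| \le \operatorname{codim} W \le g = g_{ds}(K)$, and taking the maximum over $\omega$ completes the proof.

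The step I expect to be the main obstacle is the geometric bookkeeping underlying the first two paragraphs: checking that $S = H \cap S^3$ can be arranged to be a connected Seifert surface with the stated pieces $H_\pm$, and above all verifying that the normal pushoff used in the isotropy computation genuinely realises the Seifert framing, so that bounding a curve in the $3$-manifold $H_\pm$ kills $V$ itself rather than only $V + V^T$. Pinning down the orientations and framings there is what makes the statement hold for every $\omega \in S^1 \sm \{1\}$, and is the delicate heart of the argument.
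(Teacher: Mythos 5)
Your proof is essentially correct, but it takes a genuinely different route from the paper. The paper never intersects the handlebody with $S^3$: it works with the zero-surgery manifold $M_K$, identifies $\sigma_\omega(K)$ with a signature defect $\sigma(\lambda_\omega(W))-\sigma(W)$ over any null-bordism of $(M_K,\phi)$, caps off each half $G_i \subset D^4$ of the unknotted surface with $H_g\times S^1$ to get $W_1, W_2$, and then runs a twisted-coefficient Mayer--Vietoris and duality computation on $V = W_1\cup_{M_K}-W_2$; the numerical heart is $n_1+n_2 = 2g$ (Euler characteristic of $V$) together with surjectivity of $H_1(M_K;\C_\omega)\to H_1(W_1;\C_\omega)\oplus H_1(W_2;\C_\omega)$, which plays exactly the role of your bound $\dim P_+ + \dim P_- \ge 2g_S - g$. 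Your argument is the classical Seifert-form analogue: the two kernels $P_\pm$ are your $m_i$-type terms, the handlebody condition $H_2(H;\Q)=0$ replaces the paper's $H_1(V;\C[\Z])=0$, and your final interlacing step (valid for degenerate Hermitian forms, which is precisely why the bound survives at roots of $\Delta_K$, matching the paper's improvement over Murasugi--Tristram) replaces the paper's kernel-dimension count. Your worry about the framing resolves affirmatively: since $H\pitchfork S^3$ along $S$, the normal bundle of $H$ in $S^4$ restricted to $S$ is canonically the normal bundle of $S$ in $S^3$, so a normal pushoff of a bounding surface in $H_\pm$ has boundary a genuine Seifert pushoff, and the full form $V$ (hence $A_\omega$) vanishes on each $P_\pm$ as you claim; likewise $P_+\cap P_- = \operatorname{im}(H_2(H;\Q)\to H_1(S;\Q)) = 0$ is correct.

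Two gaps remain that you should fill, neither fatal. First, $S = H\cap S^3$ is a surface with $\partial S = K$, but it may have \emph{closed} components, and these cannot in general be removed by an innermost-disc argument; since linking between a closed component and the component containing $K$ contributes off-diagonal terms, you cannot simply assert $\sigma(A_\omega(S)) = \sigma_\omega(K)$. You need either a normal-form lemma (tube the components of $H\cap S^3$ together by an ambient isotopy dragging arcs of $H$ through $S^3$, in the spirit of normal forms for surfaces and $3$-manifolds in $S^4$) or to rerun your dimension count on the disconnected surface and separately justify the invariance of the degenerate signature there. Second, the paper's standing convention is the topological locally flat category, where your transversality of $H$ with $S^3$ and the tubular-neighbourhood pushoffs require topological transversality and normal bundle results of Freedman--Quinn rather than elementary smooth general position; this is one reason the paper's more homological route, which only ever uses the given decomposition $S = G_1\cup_K G_2$, is cleaner in TOP, while your route buys a more elementary and geometrically transparent proof requiring no twisted coefficients.
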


Let $\Delta_K(t)$ denote the Alexander polynomial of $K$. A classical lower bound for the slice genus is that for every $\omega \in S^1$ such that $\Delta_K(\omega) \neq 0$, we have $|\sigma_{\omega}(K)| \leq 2 g_4(K)$ \cite{MR388373}. It follows that $|\sigma_{\omega}(K)| \leq g_{ds}(K)$ for these~$\omega$.   Our theorem refines this, since it also applies when $\omega$ is a root of the Alexander polynomial of $K$.  Given a slice knot $K$, in other words a knot with $g_4(K)=0$, and for $\omega \in S^1$ such that $\Delta_K(\omega) \neq 0$, we have $\sigma_{\omega}(K)=0$.  Therefore the classical bound contains no information on the doubly slice genus for slice knots.

On the other hand, for every $\nu\in S^1\sm\{1\}$ that is the root of some Alexander polynomial there exists a slice knot $K$ for which $\sigma_\omega(K)$ is nontrivial exactly at $\omega=\nu, \overline{\nu}$ \cite[Corollary 2.1]{Cha-Livingston:2002-1}. For any $N\in\N$, \fullref{thm:main} applied to the $N$-fold connected sum of such a knot with itself immediately produces a slice knot with doubly slice genus at least $N$, recovering a theorem of Chen~\cite{ChenArxiv}, which we discuss below. In the following result we obtain a refinement of such examples.

\begin{theorem}\label{thm:examples}
For each $N \in \mathbb{N}$ there exists a slice knot $K_N$ with $g_{ds}(K_N)=N$. In fact, we may take $K_N=\#^NJ$, the $N$-fold connected sum of $J$ with itself for some
\begin{align*}
  J \in \{& 8_{20},  10_{87}, 10_{140}, 11a28, 11a58, 11a165, 12a189, 12a377, 12a979,  12n56, \\ & 12n57,  12n62, 12n66, 12n87, 12n106, 12n288, 12n501, 12n504, 12n582, 12n670, 12n721\}.
  \end{align*}
  Here we use the notation of KnotInfo~\cite{knotinfo}.
\end{theorem}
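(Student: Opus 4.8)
The plan is to prove Theorem~\ref{thm:examples} by combining the new lower bound of \fullref{thm:main} with a matching upper bound, and then reducing the problem to finding a single slice knot $J$ with the correct signature profile. First I would recall the upper bound due to C.~McDonald (alluded to in the abstract), which should give $g_{ds}(K) \leq \max_{\omega}|\sigma_\omega(K)|$ under suitable hypotheses, or at least an upper bound that matches the lower bound of \fullref{thm:main} for the specific knots in question. The key observation is that for a \emph{slice} knot $J$ whose signature function is supported near a single pair $\nu, \overline{\nu}$ and takes value $\pm 2$ there (as in the Cha--Livingston examples, \cite[Corollary 2.1]{Cha-Livingston:2002-1}), the connected sum $\#^N J$ is again slice, its signature function is additive, so $\sigma_\omega(\#^N J) = N\sigma_\omega(J)$, giving $\max_\omega |\sigma_\omega(\#^N J)| = N \cdot \max_\omega|\sigma_\omega(J)|$.

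The main steps in order: First I would establish that each candidate knot $J$ in the list is slice, so that $\#^N J$ is slice for all $N$; sliceness for these tabulated knots can be read off from KnotInfo~\cite{knotinfo}. Second, I would compute the signature function $\sigma_\omega(J)$ for each candidate and verify that $\max_{\omega \in S^1\sm\{1\}}|\sigma_\omega(J)| = 1$. By additivity of signatures under connected sum, this yields $\max_\omega|\sigma_\omega(\#^N J)| = N$, so \fullref{thm:main} gives the lower bound $g_{ds}(\#^N J) \geq N$. Third, I would apply McDonald's upper bound to obtain $g_{ds}(\#^N J) \leq N$; here the crucial point is that the specific signature profile of $J$ is exactly what is needed to make the upper and lower bounds coincide. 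Combining the two inequalities gives $g_{ds}(K_N) = N$ with $K_N = \#^N J$.

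The hard part will be the third step: producing a matching upper bound $g_{ds}(\#^N J) \leq N$. The lower bound from \fullref{thm:main} is general, but equality requires an explicit construction of an unknotted surface in $S^4$ of genus exactly $N$ with $\#^N J$ as a cross-section, and it is precisely for this that the particular knots $J$ in the list are selected rather than arbitrary slice knots with $\max_\omega|\sigma_\omega| = 1$. I expect McDonald's upper bound to be phrased in terms of some presentation or algebraic datum associated to $J$, and the real work is checking that each listed $J$ realizes this upper bound. The curation of the list in \fullref{thm:examples} strongly suggests that these are exactly the knots (up to $12$ crossings, say) that are both slice and satisfy $\max_\omega|\sigma_\omega(J)| = 1$ \emph{and} for which McDonald's construction produces a genus-$1$ unknotted surface; verifying all three conditions for each tabulated knot, likely with computational assistance, is where the bulk of the effort lies.

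<br>

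\emph{Remark on the case $N=0$.} Note that $\#^0 J$ is the unknot, for which $g_{ds} = 0$ trivially, so the case $N=0$ requires no computation; the content of the theorem is for $N \geq 1$.
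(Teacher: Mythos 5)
Your overall skeleton matches the paper's: the lower bound $g_{ds}(\#^N J)\geq N$ comes from \fullref{thm:main} together with additivity of $\sigma_\omega$ under connected sum, using that each listed $J$ is slice with $\sigma_\omega(J)=1$ at some $\omega$ (necessarily a root of $\Delta_J$, which is exactly why the refinement in \fullref{thm:main} over the classical bound is needed). That half of your argument is correct and is what the paper does.

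The genuine gap is the upper bound, which you explicitly defer, and your guess about its nature is wrong in a way that matters. McDonald's theorem is not ``phrased in terms of some presentation or algebraic datum,'' and in particular no upper bound of the shape $g_{ds}(K)\leq \max_\omega|\sigma_\omega(K)|$ can hold: Chen's examples (discussed in the paper's introduction) have identically vanishing $\omega$-signatures and arbitrarily large doubly slice genus. The actual statement (\fullref{thm:mcdonald}) is geometric: if $K$ bounds a smoothly embedded surface in $D^4$ on which the radial Morse function has $b$ saddle points and no maxima, then $g_{ds}(K)\leq b$. Consequently the real content of the upper bound in the paper is \fullref{prop:bandmoves}: for each of the 21 knots $J$ one exhibits an explicit single band move taking $J$ to the 2-component unlink, i.e.\ a ribbon disc with two minima and one saddle, whence $g_{ds}(J)\leq 1$; summing $N$ copies of this disc gives a slice disc for $\#^N J$ with $N$ saddles and no maxima, so $g_{ds}(\#^N J)\leq N$. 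These band moves are produced by 21 explicit diagrams (\fullref{fig:1} and \fullref{fig:2}), found with computer assistance. Without identifying this geometric criterion and supplying the band moves, the equality $g_{ds}(K_N)=N$ is not established. (A minor further point: your opening paragraph invokes Cha--Livingston knots with signature value $\pm 2$; such knots cannot work here, since they would force $g_{ds}(\#^N J)\geq 2N$, and indeed your later steps correctly switch to requiring the maximal signature to equal $1$.)
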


\begin{proof} The 21 knots listed are slice knots, found by searching the KnotInfo tables, of at most 12 crossings, whose $\omega$-signature equals $1$ for some $\omega \in S^1$ with $\Delta_J(\omega)=0$. As the lower bound of \fullref{thm:main} is additive under connected sum we therefore have $g_{ds}(K_N)\geq N$.

We will show in \fullref{prop:bandmoves} that each of these knots admits a slice disc on which the radial Morse function has two minima and one saddle point i.e.~$J$ arises from one band move on the 2-component unlink. The following theorem of Clayton McDonald therefore shows that each of the knots $J$ has doubly slice genus at most $1$, and that $K_N$ therefore has $g_{ds}(K_N)\leq N$.
\end{proof}

\begin{theorem}[McDonald~{\cite[Theorem~3.2]{McDonald}}]\label{thm:mcdonald}
Let $K \subset S^3$ be a knot and let $\Sigma$ be a smoothly embedded surface in $D^4$ such that the radial Morse function restricts to a Morse function on $\Sigma$ with~$b$ saddle points and no maxima. Then $g_{ds}(K)\leq b$.
\end{theorem}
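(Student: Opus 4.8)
The plan is to prove the bound by producing, for the given $\Sigma$, an unknotted surface $F\subset S^4$ of genus at most $b$ having $K$ as a cross-section. Since a connected surface is unknotted precisely when it bounds an embedded $3$-dimensional handlebody, it is enough to build $F$ as the boundary $F=\partial W$ of such a handlebody $W\subset S^4$; its intersection with the cross-sectional $S^3$ will then be a Seifert surface for $K$.

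First I would convert the Morse hypothesis into a movie. Reading the radial function on $D^4$ outward from the centre, the $m$ minima appear as the births of an $m$-component unlink $U$, the $b$ saddles appear as band moves, and the absence of maxima means that no component is ever capped off before the boundary is reached. Hence $K$ is obtained from $U$ by exactly $b$ band moves, with no deaths. Writing $S^4=D^4_-\cup_{S^3}D^4_+$ and taking the equatorial $S^3$ as the cross-sectional sphere, I would realise $U$ by doubling its $m$ disjoint spanning disks into $m$ disjoint unknotted $2$-spheres bounding $m$ disjoint $3$-balls; this is an unknotted (disconnected) surface with cross-section $U$.

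Next I would assemble $W$ in two stages. First attach $m-1$ trivial $1$-handles, pushed off the equator into $D^4_+$, to join the $m$ balls into a single ball: this leaves the cross-section equal to $U$ and the boundary an unknotted sphere. Then realise each of the $b$ band moves by a $1$-handle whose trace on the equator is the corresponding band. Because the surface is already connected, each such handle raises the genus by exactly one while performing the prescribed band move on the cross-section; after all $b$ of them the cross-section is $K$, the handlebody $W$ has genus $b$, and $F=\partial W$ is unknotted. This yields $g_{ds}(K)\le b$.

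The main obstacle is embeddedness of these last $b$ handles, and it is here that the hypotheses are genuinely used. The bands are embedded in the equatorial $S^3$, but in general they pierce the interiors of the spanning disks of $U$ in ribbon-type intersections, so a handle lying in $S^3$ would run through the balls; this is precisely the phenomenon that prevents one from connecting everything with the bands themselves and thereby achieving smaller genus. The crux is to isotope each band handle off the equator into $D^4_\pm$ to restore embeddedness while (i) keeping its trace on $S^3$ equal to the prescribed band, so the movie—and hence the final cross-section $K$—is unchanged, and (ii) keeping $W$ an embedded handlebody, so that unknottedness and the genus count survive. The no-maxima hypothesis is exactly what makes this possible: every critical point is a band move rather than a capping move, so the entire passage from $U$ to $K$ can be carried out by handle attachments within the handlebody framework.
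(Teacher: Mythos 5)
The theorem you are proving is not actually proved in this paper at all --- it is quoted from McDonald's paper --- so your proposal must be judged as an attempt to reprove McDonald's result, and unfortunately it has a genuine gap precisely at the step that constitutes the entire content of that result. Your first three steps are fine: the no-maxima hypothesis does give a banded-unlink presentation ($K$ is obtained from the $m$-component unlink $U$ by $b$ band moves), the $m$ spheres and balls realize $U$, and joining the balls by $m-1$ trivial $1$-handles gives a $3$-ball $W_0$ whose boundary is an unknotted sphere with equatorial cross-section $U$. The problem is the last step, where your requirements (i) and (ii) are mutually exclusive rather than simultaneously achievable. If the band handle's trace on $S^3$ is the band $B_j$ (condition (i)), the handle contains $B_j$, and $B_j$ passes through the interiors of the spanning discs $D_i$; those discs lie in the interior of $W_0$, so the handle meets $\operatorname{int}(W_0)$, violating (ii). (The bands genuinely must pierce the discs: for the paper's examples, with $m=2$ and $b=1$, bands disjoint from the discs would make $\text{discs}\cup\text{band}$ an embedded disc in $S^3$ bounded by $K$, forcing $K$ to be unknotted.) If instead you push the handle off the equator deeply enough to clear the balls, then a local computation of the cross-section shows the band move no longer occurs at the equatorial level: the cross-section of $\partial(W\cup h)$ becomes isotopic to the \emph{un}-surgered link, not to $K$. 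So the construction fails in both regimes, and asserting that the no-maxima hypothesis ``is exactly what makes this possible'' is not an argument --- that hypothesis was already fully spent in step one (ruling out deaths) and has no bearing on the embedding problem.

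Worse, this is not a fixable technicality within your framework: there is a concrete obstruction. One has $\pi_1(S^4\setminus \partial W_0)\cong\mathbb{Z}$, and the class (rel endpoints) of a band core is its algebraic count of ribbon intersections with the discs, i.e.\ its linking number with the sphere $\partial W_0$. Whenever this count is nonzero --- as for the standard one-ribbon-singularity presentations of knots such as the stevedore knot --- no arc isotopic rel endpoints to the band core in the complement of $\partial W_0$ can be disjoint from $W_0$; equivalently, every surface bounded by $U_i$ inside a ball system with the required boundary behaviour must meet the band. So the handlebody ``balls plus tubes along the bands'' simply does not exist in general, even though the surface one wants to build \emph{is} unknotted. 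Establishing that unknottedness is exactly the nontrivial part of McDonald's proof (alternatively, one can compute that the complement of the relevant surface has fundamental group $\mathbb{Z}$ and invoke the Hillman--Kawauchi unknotting theorem, at least in the topological category); a correct argument must also exploit that unknottedness is an isotopy-invariant property of the finished surface, so the unknotting isotopy is free to destroy the equatorial cross-section --- a freedom your construction, which insists on building the handlebody level-by-level compatibly with the cross-section, never uses. Note also that the double of the ribbon surface has genus $b-m+1\le b$, so if your style of argument worked without loss it would prove every ribbon knot is doubly slice, which is false; the ``wasted'' $m-1$ of genus and the subtle unknotting argument are both essential.
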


\begin{corollary}[to \fullref{thm:examples}]
  Let $M,N$ be nonnegative integers with $M$ even and $M \leq N$. There exists a knot $K$ with $M=2g_4(K)$ and $N= g_{ds}(K)$.
\end{corollary}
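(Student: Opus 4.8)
The plan is to realize the pair $(M,N)$ by a single connected sum. Write $m=M/2\in\N$ and note that $N-M\ge 0$. I would set $K = P \mathbin{\#} (\#^{N-M} J)$, where $J$ is one of the $21$ knots produced by \fullref{thm:examples}, so that $J$ is slice, $g_{ds}(J)=1$, and there is a root $\nu=e^{i\theta_0}\in S^1\sm\{1\}$ of $\Delta_J$ with $\sigma_\nu(J)=1$ and $\sigma_\omega(J)=0$ for $\omega\notin\{\nu,\bar\nu\}$. For $P$ I would take the $m$-fold connected sum of a single genus-one knot $W$ whose signature function is $+2$ on an arc of $S^1$ containing $\nu$. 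Such a $W$ exists: the twist knot with Seifert matrix $\left(\begin{smallmatrix}-1&1\\0&-D\end{smallmatrix}\right)$ has signature $-2$ exactly on $\{e^{i\theta}:\cos\theta<1-\tfrac1{2D}\}$ and $0$ elsewhere, so taking $D$ large enough that this arc contains $\nu$ and then passing to the mirror produces the desired $W$. When $M=0$ I take $P$ to be the unknot and recover \fullref{thm:examples} verbatim.

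Because $\nu$ lies strictly inside the definite arc, $\Delta_P(\nu)\neq 0$ and $\sigma_\nu(P)=2m$. Hence $g_4(P)=m$: the inequality $g_4(P)\le g_3(P)=m$ is automatic, while the classical bound gives $2g_4(P)\ge|\sigma_\nu(P)|=2m$. The sandwich $2g_4\le g_{ds}\le 2g_3$ then forces $g_{ds}(P)=2m=M$. The upper bounds for $K$ now follow from subadditivity of both genera under connected sum: since $J$ is slice, $2g_4(K)\le 2g_4(P)=M$, and $g_{ds}(K)\le g_{ds}(P)+(N-M)g_{ds}(J)=M+(N-M)=N$.

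For the matching lower bounds I use that $\sigma_\omega$ is additive under connected sum. At $\omega=\nu$ the two pieces reinforce one another,
\[
\sigma_\nu(K)=\sigma_\nu(P)+(N-M)\,\sigma_\nu(J)=2m+(N-M)=N,
\]
so \fullref{thm:main} yields $g_{ds}(K)\ge N$, and with the upper bound I conclude $g_{ds}(K)=N$. For the slice genus I cannot test at $\nu$, since $\nu$ is a root of $\Delta_J$ and hence of $\Delta_K$, so the classical bound does not apply there. Instead I pick $\omega_1$ in the definite arc of $P$, away from the finitely many roots of $\Delta_P\Delta_J$; then $\Delta_K(\omega_1)\neq 0$ and, as $\omega_1\notin\{\nu,\bar\nu\}$, the slice summands contribute nothing, so $\sigma_{\omega_1}(K)=\sigma_{\omega_1}(P)=2m$. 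The classical bound then gives $2g_4(K)\ge 2m=M$, whence $2g_4(K)=M$.

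The crux is the \emph{alignment} of the signatures. The bound of \fullref{thm:main} is a maximum of $|\sigma_\omega|$ over $\omega$, and this maximum is not additive under connected sum; summing, say, a trefoil (whose signature is concentrated near $-1$) with the slice knots (whose signature is concentrated at $\nu$) would detect only $\max(M,N-M)$. The purpose of tailoring $W$, namely its mirror and the parameter $D$, is to slide the definite arc of $\sigma(P)$ over $\nu$ and with the correct sign, so that a single evaluation at $\omega=\nu$ sees the full value $N$, while a nearby generic $\omega_1$ still isolates the contribution $M$ of $P$ for the slice-genus bound. The one fact that requires genuine (if elementary) verification is the existence of a genus-one knot with signature $+2$ on a prescribed arc, which the twist-knot computation above supplies.
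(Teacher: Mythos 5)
Your proof is correct, and it follows the same blueprint as the paper's: realize $K$ as a connected sum of $M/2$ copies of a signature-carrying genus-one knot with $N-M$ copies of a slice knot whose signature jumps at a root of its Alexander polynomial, aligned so that a single $\omega$ sees the full value $N$ for the $g_{ds}$ bound. The differences are in execution rather than strategy. The paper takes the genus-one summand to be the mirror of $5_2$ and exploits the happy coincidence that $\omega = e^{\pi i/3}$ is simultaneously a non-root of $\Delta_{5_2}$ with $\sigma_\omega = 2$ and a root of $\Delta_{8_{20}}$ with $\sigma_\omega(8_{20})=1$; you instead manufacture a double twist knot with parameter $D$ whose definite arc can be slid over \emph{any} prescribed $\nu \in S^1\sm\{1\}$, which makes your argument self-contained and lets it pair with any of the 21 knots of \fullref{thm:examples} rather than just $8_{20}$ --- a genuine, if modest, gain in flexibility. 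For the slice-genus lower bound, the paper uses the averaged signature function $\ol\sigma$, which satisfies $|\ol\sigma_\rho| \leq 2g_4$ at all $\rho$, while you evaluate at a nearby generic $\omega_1$ avoiding the roots of $\Delta_P\Delta_J$; these are morally identical (the averaged signature is the average of such nearby evaluations), and both are valid. One small misattribution: you justify $\sigma_{\omega_1}(K)=\sigma_{\omega_1}(P)$ by claiming the listed knots $J$ have signature supported exactly on $\{\nu,\bar\nu\}$, which the paper asserts only for the Cha--Livingston existence examples, not for the 21 knots in the table; but this claim is unnecessary, since $J$ is slice and $\Delta_J(\omega_1)\neq 0$ already forces $\sigma_{\omega_1}(J)=0$ by the classical bound, so no gap results.
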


\begin{proof}
  Let $J$ be the mirror image of the knot $5_2$. This has $g_4(J)= g_3(J)= g_{ds}(J) =1$, and $\sigma_{\omega}(J) =2$ for $\omega := e^{\pi i/3}$, which is not a root of the Alexander polynomial.  The knot $L:= 8_{20}$ has $g_4(L)=0$, but $\sigma_{\omega}(L) = 1$ and $g_{ds}(L) =1$. Taking
  \[K:= \big(\#^{M/2} J\big) \# \big(\#^{N-M} L\big)\]
  yields a knot with $2g_4(K) \leq M$ and $g_{ds}(K) \leq N$.
  Then $\sigma_{\omega}(K) = N$, so $g_{ds}(K) =N$ by \fullref{thm:main}.
Since $|\sigma_{\rho}(K)| \leq 2 g_4(K)$ except for finitely many values of $\rho \in S^1$, the averaged signature function defined  by \[\ol{\sigma}_{e^{i\pi\theta}}(K) := \frac{1}{2} \left(\lim_{\varphi \to \theta^+} \sigma_{e^{i\pi\varphi}}(K) + \lim_{\varphi \to \theta^-} \sigma_{e^{i\pi\varphi}}(K)\right)\] satisfies $|\ol{\sigma}_{\rho}(K)| \leq 2 g_4(K)$ for all $\rho \in S^1$.    Then $\ol{\sigma}_{\omega}(K) = M$ so $2g_4(K) = M$.
\end{proof}

\subsubsection*{Connections to previous work}

A knot $K$ is \emph{doubly slice} if $g_{ds}(K)=0$, and the doubly slice genus is a measure of how far a knot is from being doubly slice. The first detailed study of doubly slice knots, and the related algebra, was made by Sumners~\cite{MR290351}. Further foundational algebraic studies, related to the work in this article, are those of Stoltzfus~\cite{MR521738} and Levine~\cite{MR1004605}.

Instead of the doubly slice genus, a different measure of the failure of a knot to be doubly slice was studied by Cherry Kearton \cite{MR616563}. Given a slice knot $K$, he considered the minimal complex dimension of $H_1(S^4\sm J;\C[t,t^{-1}])$ among all knotted 2-spheres $J\subset S^4$ with cross-section $K$. He gave lower bounds for his invariant arising from signature obstructions. The signatures he considered are the $(p,i)$-signatures of the Blanchfield form (see \cite{MR1004605}), and it is known that these signatures can be used to compute the $\omega$-signatures of $K$ \cite[Theorem 2.3]{MR1004605}, tempting one to imagine a connection to the results of this paper. But despite the similar flavour of the invariants he uses, Kearton's complexity measure appears to be independent of the doubly slice genus, so there is no clear dependency between his work and ours.

This article was partly inspired by work of Wenzhao Chen~\cite{ChenArxiv}, who ingeniously applied Casson-Gordon invariants to show that for every $N\in\N$, there is a slice knot $K$ with $g_{ds}(K) \geq N$. In particular he proved that $g_{ds}(K) - 2g_4(K)$ can be arbitrarily large.  Casson-Gordon invariants rely on the existence of interesting metabelian representations of the knot group $\pi_1(S^3\sm K)$ and are thus less basic than the $\omega$-signatures in this paper, which can be thought of as arising from the abelianisation of the knot group $\pi_1(S^3\sm K) \to \Z$. While our method refines Chen's theorem, with a more elementary invariant, we cannot recover Chen's examples. These examples, as the original Casson-Gordon examples, are constructed using the Stevedore's knot. With rational coefficients the Stevedore's knot shares a Seifert matrix with $9_{46}$, which is doubly slice. This means Chen's examples have hyperbolic Seifert matrices over the rational numbers, and so for all $\omega\in S^1\sm \{1\}$ the $\omega$-signature of his knots vanish.

\subsection*{Outline}

The paper is organised as follows. In \fullref{sec:signature-defects} we recall the signature defect invariants of a 3-manifold with a map to $B\Z$, associated with a cobounding $4$-manifold. We equate the signature defect invariant with $\omega$-signatures.  In \fullref{section:lower-bounds} we use this to prove \fullref{thm:main}. In \fullref{sec:examples} we establish the upper bounds for the examples listed in \fullref{thm:examples}.

\subsection*{Acknowledgements}
We thank Lucia Karageorghis of Durham University, who was supported by an LMS summer undergraduate research fellowship, for help finding the band moves for the knots in \fullref{prop:bandmoves}, as part of her study of the doubly slice genus of the prime knots up to 12 crossings. She was aided by the Kirby calculator/KLO; we are grateful to Frank Swenton for creating this excellent tool.  We are also grateful for the existence of KnotInfo; we thank Chuck Livingston for help interpreting it and for his insightful comments during the preparation of this article.

\section{Signature defects}\label{sec:signature-defects}

Let $R$ be either the ring $\C$ with the involution given by complex conjugation, or the ring of finite complex Laurent polynomials $\C[\Z]\cong\C[t,t^{-1}]$ with involution given by $\sum a_k t^k\mapsto \sum \overline{a_k}t^{-k}$.  An \emph{$R$-module} will mean a left $R$-module unless otherwise stated, and $\overline{\phantom{P}}$ will denote the use of the involution to switch a left $R$-module to a right $R$-module or vice-versa.

A CW pair of connected topological spaces $(X,Y)$ is \emph{over $\Z$} if $X$ is equipped with a homomorphism $\phi\colon \pi_1(X)\to \Z$. We write $(X,Y,\phi)$ for these data, or $(X,\phi)$ if $Y=\emptyset$. Write $p\colon\widetilde{X}\to X$ for the cover corresponding to $\phi$ and $\widetilde{Y}=p^{-1}(Y)$ for the corresponding cover of~$Y$. Given a map of rings with involution $\alpha\colon\C[\Z]\to R$, the ring $R$ becomes an $(R,\C[\Z])$-bimodule, and there are associated twisted homology and cohomology modules over~$R$
\begin{align*}
H_r(X,Y;\alpha)&:=H_r(R\otimes_{\alpha} C_*(\wt{X},\widetilde{Y};\C)),\\
H^r(X,Y;\alpha)&:=H_r(\Hom_{\C[\Z]}(\overline{C_*(\wt{X},\widetilde{Y};\C)},R)).
\end{align*}
Note we are abusing notation in suppressing the particular $\phi$ being used, but for all applications in this article the choice of  $\phi$ will be understood, so this should cause no confusion.

Setting $\alpha$ to be the identity map $\Id\colon \C[\Z]\to \C[\Z]$ returns the ordinary complex coefficient homology and complex coefficient cohomology with compact support of the cover $(\wt{X},\wt{Y})$. We denote these by $H_r(X,Y;\C[\Z])$ and $H^r(X,Y;\C[\Z])$ respectively.

For each $\omega\in S^1\sm\{1\}$ there is a map of rings with involution
\[
\alpha_\omega\colon \C[t,t^{-1}]\to\C;\qquad \alpha_\omega(t)=\omega.
\]
The map $\alpha_\omega$ induces a $(\C,\C[\Z])$-bimodule structure on $\C$ and we will write $\C_\omega$ when we wish to emphasise this structure is being used. We will write
\[
H_r(X,Y;\C_\omega):=H_r(X,Y;\alpha_\omega),\qquad H^r(X,Y;\C_\omega):=H_r(X,Y;\alpha_\omega).
\]

Now consider $(X,\phi)$ where $X$ is a compact, oriented $n$-dimensional manifold with (possibly empty) boundary. Let $PD\colon H^{n-k}(X;\C_\omega)\to H_{k}(X,\partial X;\C_\omega)$ denote the Poincar\'{e} duality isomorphism. Define a map of complex vector spaces
\[
\lambda_\omega(X)\colon H_k(X;\C_\omega)\to H_k(X,\partial X;\C_\omega)\xrightarrow{PD^{-1}}H^{n-k}(X;\C_\omega)\xrightarrow{\operatorname{ev}} \overline{\Hom_\C(H_{n-k}(X;\C_\omega),\C)},
\]
where $\operatorname{ev}$ denotes the \emph{evaluation} map given by $\operatorname{ev}([f])([z\otimes x])=z\cdot\overline{f(x)}$. The map $\lambda_\omega(X)$ determines a pairing
\[
H_{n-k}(X;\Cz)\times H_{k}(X;\Cz)\to \C;\qquad (x,y)\mapsto \lambda_\omega(X)(y)(x),
\]
which is hermitian and sesquilinear but in general is degenerate. In particular, when $n=2k$, we may take the signature of this complex hermitian pairing, denoted $\sigma(\lambda_\omega(X))\in\Z$.

\begin{definition}For $W$ a compact, oriented $4$-manifold with (possibly empty) boundary, over $\Z$, the (middle dimensional) $\C_\omega$-coefficient \emph{intersection form} is the hermitian sesquilinear form $(H_2(W;\C_\omega),\lambda_{\omega}(W))$.
\end{definition}

\begin{definition}
Let $(M,\phi)$ be a closed, connected, oriented $3$-manifold over $\Z$. A \emph{null-bordism} of $(M,\phi)$ is a pair $(W,\psi)$ consisting of a compact, connected, oriented $4$-manifold~$W$ with boundary $\partial W= M$ and a homomorphism $\psi\colon \pi_1(W)\to \Z$ such that $\psi|_{\partial W}=\phi$.
\end{definition}

Given a null-bordism $(W,\psi)$ of $(M,\phi)$, we define the \emph{$\omega$-signature defect}
\[
\sigma_\omega(M): = \sigma(\lambda_{\omega}(W))-\sigma(W).
\]
(We are abusing notation in suppressing the particular $\phi$ and $\psi$.)

\begin{proposition}\label{thm:welldefined}
Given a closed, connected, oriented $3$-manifold $(M,\phi)$ over $\Z$, and any $\omega\in S^1\sm\{1\}$, the $\omega$-signature defect $\sigma_\omega(M)$ is defined and well-defined, independent of the choice $(W,\psi)$.
\end{proposition}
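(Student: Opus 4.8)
The plan is to first guarantee that a null-bordism exists at all, and then to show that any two of them produce the same number by gluing them into a single closed $4$-manifold and reducing to a statement purely about closed manifolds.

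\textbf{Existence.} The homomorphism $\phi\colon\pi_1(M)\to\Z$ is classified by a map $M\to S^1\simeq B\Z$, so $(M,\phi)$ represents a class in the oriented bordism group $\Omega_3^{\SO}(S^1)$. Since $\widetilde\Omega_n^{\SO}(S^1)\cong\Omega_{n-1}^{\SO}(\pt)$ and $\Omega_2^{\SO}(\pt)=\Omega_3^{\SO}(\pt)=0$, this group vanishes, so $(M,\phi)$ bounds: there is a compact oriented $4$-manifold with boundary $M$ carrying a map to $S^1$ extending the classifying map of $\phi$, i.e.\ a $\psi$ restricting to $\phi$. As $M$ is connected it lies in a single component $W$, whose only boundary is $M$, so I may take the null-bordism connected.

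\textbf{Reduction to the closed case.} Given two null-bordisms $(W_1,\psi_1)$ and $(W_2,\psi_2)$, I would form the closed oriented $4$-manifold $X:=W_1\cup_M \overline{W_2}$. Since $\psi_1|_M=\phi=\psi_2|_M$, van Kampen assembles a homomorphism $\psi\colon\pi_1(X)\to\Z$ restricting to each $\psi_i$, so $(X,\psi)$ is a closed $4$-manifold over $\Z$. Orientation reversal negates both signatures, and because the gluing locus $M$ is \emph{closed} there is no Wall non-additivity term, so ordinary Novikov additivity and its $\Cz$-twisted analogue give $\sigma(X)=\sigma(W_1)-\sigma(W_2)$ and $\sigma(\lambda_\omega(X))=\sigma(\lambda_\omega(W_1))-\sigma(\lambda_\omega(W_2))$. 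Subtracting,
\[
\big(\sigma(\lambda_\omega(W_1))-\sigma(W_1)\big)-\big(\sigma(\lambda_\omega(W_2))-\sigma(W_2)\big)=\sigma(\lambda_\omega(X))-\sigma(X),
\]
so the proposition reduces to the \emph{closed case}: $\sigma(\lambda_\omega(X))=\sigma(X)$ for every closed oriented $(X,\psi)$ over $\Z$.

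\textbf{The closed case, and the main obstacle.} Set $d(X):=\sigma(\lambda_\omega(X))-\sigma(X)$. It is additive under disjoint union and negates under orientation reversal, so to see that it descends to a homomorphism $\Omega_4^{\SO}(S^1)\to\Z$ it suffices to check that it vanishes on boundaries. If $(X,\psi)=\partial(V,\Psi)$ over $\Z$ then $\sigma(X)=0$ because the ordinary signature is a bordism invariant, and $\sigma(\lambda_\omega(X))=0$ by a half-lives-half-dies argument: the image of $H_2(V;\Cz)\to H_2(X;\Cz)$ is a Lagrangian of the correct dimension for the twisted intersection form modulo its radical. Since $\Omega_4^{\SO}(S^1)\cong\Omega_4^{\SO}(\pt)\cong\Z$ is generated by $\mathbb{CP}^2$ with constant classifying map—for which the $\Cz$ coefficients are untwisted, so $d(\mathbb{CP}^2)=0$—I conclude $d\equiv0$, which is the closed case. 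I expect the half-lives-half-dies step to be the crux: one must confirm that the twisted form is Hermitian and exhibit the Lagrangian, which rests on Poincar\'e--Lefschetz duality for the $\Cz$ local system over the field $\C$; everything else is formal manipulation of signatures and bordism.
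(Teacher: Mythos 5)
Your proposal is correct and takes essentially the same route as the paper: existence from $\Omega_3^{\SO}(S^1)=\Omega_3(B\Z)=0$, and well-definedness by gluing the two null-bordisms along $M$ and running the standard twisted Novikov additivity plus closed-case vanishing (via bordism invariance over $\Omega_4^{\SO}(S^1)\cong\Z$ generated by $\mathbb{CP}^2$ with constant classifying map), which is exactly the ``well-known Novikov additivity argument'' the paper outlines and delegates to its citation. One slip to correct in your half-lives-half-dies step: for $X=\partial V$ there is no map $H_2(V;\Cz)\to H_2(X;\Cz)$; the Lagrangian is $\ker\bigl(H_2(X;\Cz)\to H_2(V;\Cz)\bigr)$, equivalently the image of the boundary map $H_3(V,X;\Cz)\to H_2(X;\Cz)$.
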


\begin{proof} Because $\Omega_3(B\Z)=0$, there always exists a null-bordism $(W,\psi)$ for $(M,\phi)$. The proof that the resultant $\omega$-signature defect is independent of the choice of $(W,\psi)$ is a well-known Novikov additivity argument, as we now outline. First, write $i\colon H_2(M;\Cz)\to H_2(W;\Cz)$ for the inclusion induced map. The image of~$i$ lies in the kernel of~$\lambda_\omega(X)$ by exactness of the long exact sequence of the pair $(W,M)$. The restriction of $\lambda_\omega(X)$ to the quotient $H_2(W;\Cz)/i(H_2(M;\Cz))$ determines a nonsingular pairing \cite[Proposition 5.3 (i)]{MR3609203}. Thus the signature of $\lambda_\omega(W)$ and the signature of its restriction to $H_2(W;\Cz)/i(H_2(M;\Cz))$ agree. We now refer the reader to the proof of \cite[Proposition 5.3 (ii)]{MR3609203} for the completion of the argument.
\end{proof}

\begin{example}
The main example we are interested in is the closed, oriented $3$-manifold $M_K$ obtained by 0-framed Dehn surgery on $S^3$ along an oriented knot $K$. The orientation on the knot determines a natural map $\phi_K\colon \pi_1(M_K)\to \Z$ via abelianisation.

The associated $\C[\Z]$-coefficient homology $H_*(M_K;\C[\Z])$ is torsion; that is there exists a Laurent polynomial $p\in \C[\Z]$ such that $p\cdot H_*(M_K;\C[\Z])=0$.
\end{example}

\begin{example}\label{ex:surface}Let $G\subset D^4$ be a properly embedded, connected genus $g$ surface with one boundary component, homeomorphic to $\overline{\Sigma_g \sm D^2} =:\Sigma_{g,1}$.  Let $\nu G$ be an open tubular neighbourhood extending an open tubular neighbourhood of the boundary knot $K\subset S^3$. Let $H_g$ denote the $3$-dimensional handlebody of genus $g$ and let $\Sigma_g$ be its boundary. By choosing a disc $D^2\subset \partial H_g$, decompose the boundary of $H_g\times S^1$ as
\[
\partial(H_g\times S^1)=( \Sigma_{g,1} \times S^1) \,\cup_{S^1\times S^1} \,(D^2\times S^1).
\]
Glue the exterior of $G$ to $H_g\times S^1$, along $\Sigma_{g,1} \times S^1$ to form
\[
W:=(D^4\sm \nu G)\,\cup_{G\times S^1}\,(H_g\times S^1),
\]
a compact, connected, oriented $4$-manifold with boundary $M_K$, the $0$-surgery on~$K$.  Mayer-Vietoris calculations give
\[
H_k(W;\Z)\cong\left\{\begin{array}{lll}\Z&k=0,&\\
\Z&k=1, &\text{generated by a meridian of $G$,}\\
\Z^{2g}&k=2, &\\
0&\text{otherwise.}&\end{array}\right.
\]
In particular, the abelianisation of $\phi\colon\pi_1(M_K)\to \Z$ extends to $\psi\colon \pi_1(W)\to \Z$ so that $(W,\psi)$ is a null-bordism of $(M_K,\phi)$.  Note that the homology is independent of the choice of identification of $G$ with $\Sigma_{g,1} \subset \partial H_g$.
\end{example}

\begin{lemma}\label{ex:seifertsurface} Let $K\subset S^3$ be an oriented knot and let $M_K$ be the 0-surgery manifold. For any $\omega\in S^1\sm\{1\}$ and there is equality
\[
\sigma_\omega(M_K)=\sigma_\omega(K).
\]
\end{lemma}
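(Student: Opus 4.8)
The plan is to evaluate the $\omega$-signature defect on the explicit null-bordism constructed in \fullref{ex:surface}, choosing $G$ to be a genus-$g$ Seifert surface $F$ for $K$ with its interior pushed into $D^4$, and letting $V$ be the associated Seifert matrix. Since \fullref{thm:welldefined} guarantees independence of the choice of null-bordism, it suffices to show for this particular $(W,\psi)$ that $\sigma(\lambda_\omega(W)) = \sgn((1-\omega)V + (1-\omega^{-1})V^T) = \sigma_\omega(K)$ and that the ordinary signature satisfies $\sigma(W) = 0$.

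I would first reduce the twisted computation to the exterior $A := D^4 \sm \nu G$. In $W = A \cup_{G \times S^1} (H_g \times S^1)$ the homomorphism $\psi$ sends the $S^1$-direction (the meridian of $G$) to a generator of $\Z$, so on $H_g \times S^1$ and on the gluing region $\Sigma_{g,1} \times S^1$ the $\Cz$-coefficient system is pulled back from the nontrivial rank-one local system on the $S^1$-factor. Because $H_*(S^1;\Cz) = 0$ for $\omega \neq 1$, a K\"unneth argument gives $H_*(H_g \times S^1;\Cz) = 0 = H_*(\Sigma_{g,1} \times S^1;\Cz)$, and the Mayer-Vietoris sequence then produces an isomorphism $H_2(W;\Cz) \cong H_2(A;\Cz)$ under which $\lambda_\omega(W)$ corresponds to the twisted intersection form of the Seifert-surface exterior.

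The heart of the argument, and the step I expect to be the main obstacle, is to identify this twisted intersection form with the hermitian Seifert form. I would describe the infinite cyclic cover of $A$ as a bi-infinite stack of copies of $D^4$ cut along $G$, glued by the inclusions of the two pushoffs of $F$ that are recorded by $V$ and $V^T$; this is the four-dimensional analogue of the classical Seifert-surface presentation of the Alexander module. Representing a basis of $H_2(A;\Cz)$ by surfaces capping the basis curves $\gamma_1,\dots,\gamma_{2g}$ of $H_1(F)$ on the two sheets adjacent to a fixed lift of $G$, the $\Cz$-intersection number of the $i$-th and $j$-th generators is computed as an $\omega$-weighted count of intersections with the translates of the pushoffs of $\gamma_j$. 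The deck translation contributes the weights $1$ and $\omega$ (respectively $1$ and $\omega^{-1}$ on the conjugate side) to the linking numbers $\operatorname{lk}(\gamma_i,\gamma_j^{\pm})$, so that the form is represented by $(1-\omega)V + (1-\omega^{-1})V^T$; the careful bookkeeping needed to fix the signs, the sheet conventions, and the exact placement of the $(1-\omega)$ and $(1-\omega^{-1})$ weights is the delicate part. By definition its signature is $\sigma_\omega(K)$.

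Finally I would show $\sigma(W) = 0$ by proving that the ordinary integral intersection form vanishes identically. A Mayer-Vietoris computation (using $H_2(A;\Z) \cong \Z^{2g}$ and $H_2(H_g\times S^1;\Z)\cong \Z^g$) shows that $H_2(W;\Z)$ is generated by tori of the form $c \times S^1$, where $c$ is either a basis curve of $H_1(F)$ lying on the separating hypersurface $\Sigma_{g,1}\times S^1$ or a core circle of the handlebody $H_g$. In either case the normal bundle of such a torus in $W$ is trivial --- for the hypersurface tori because the hypersurface is two-sided, and for the handlebody tori because a circle in the handlebody $H_g$ has trivial normal bundle --- so every self-intersection vanishes; moreover any two of these tori can be made disjoint by pushing off the hypersurface or into distinct handles, so all mutual intersections vanish as well. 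Hence the intersection form is identically zero and $\sigma(W)=0$. Combining the three computations gives $\sigma_\omega(M_K) = \sigma(\lambda_\omega(W)) - \sigma(W) = \sigma_\omega(K)$.
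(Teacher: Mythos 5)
Your overall route is the same as the paper's: invoke \fullref{thm:welldefined} to reduce to a single null-bordism, take the $W$ of \fullref{ex:surface} built from a pushed-in Seifert surface, identify $\lambda_\omega(W)$ with $(1-\omega)V+(1-\omega^{-1})V^T$, and check $\sigma(W)=0$. The difference is that the paper does not prove the two computational claims at all --- it cites Ko and Cochran--Orr--Teichner for them --- whereas you attempt them directly. Your reduction to the exterior $A=D^4\sm\nu G$ via $H_*(S^1;\Cz)=0$ and the $\omega$-weighted intersection count in the cut-open infinite cyclic cover is exactly the argument of those references, but you explicitly defer the sign and sheet bookkeeping, which is the entire content of that step; as written it is an outline rather than a proof.

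The genuine error is in your argument that $\sigma(W)=0$. The tori $c\times S^1$ do \emph{not} generate $H_2(W;\Z)\cong\Z^{2g}$: inside $W$, each hypersurface torus $\gamma_i\times S^1$ is homologous either to zero (if $\gamma_i$ bounds a meridian disc $D$ in $H_g$, then $\gamma_i\times S^1$ bounds $D\times S^1$) or to a combination of core tori $c_k\times S^1$, so all of your proposed generators together span only a rank-$g$ subgroup. The missing $g$ generators, which the Mayer--Vietoris boundary map produces, are classes $E_k$ obtained by gluing a meridian disc $D_k\times\{pt\}\subset H_g\times S^1$ to a surface in $A$ bounded by $\partial D_k$ (such a surface exists precisely because $\partial D_k$ has linking number $0$ with $G$). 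These satisfy $E_k\cdot(c_l\times S^1)=\pm\delta_{kl}$, since a meridian disc meets the corresponding core circle transversally in one point; hence the integral intersection form of $W$ is \emph{not} identically zero, contrary to your conclusion. In the basis $\{c_k\times S^1,E_k\}$ it has the shape $\left(\begin{smallmatrix}0&I\\I&C\end{smallmatrix}\right)$. The conclusion $\sigma(W)=0$ is still true, but for a different reason: the tori span a half-rank summand on which the form vanishes and which pairs unimodularly against the classes $E_k$, and any symmetric form of this hyperbolic shape has signature zero (complete the square to kill $C$). So your plan is repairable, but the step as written rests on a false description of $H_2(W;\Z)$ and a false intermediate claim about the form.
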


\begin{proof}
As the $\omega$-signature is well-defined, independent of choice of null-bordism,  it suffices to find a single null-bordism of $M_K$ over $\Z$ such that the signature of the $\C_\omega$-coefficient intersection form agrees with $\sigma_\omega(K)$. Perform the construction of \fullref{ex:surface} on a pushed in Seifert surface $F$ for $K$. In this case it is shown in shown by Ko \cite[pp.~538-9]{MR1010376} (see also Cochran-Orr-Teichner~\cite[Lemma 5.4]{MR2031301}) that in some basis the resultant $\C_\omega$-coefficient intersection form of $W_F$  has matrix $(1-\omega)V+(1-\omega^{-1})V^T$, where $V$ is a Seifert matrix associated to $F$.  Moreover the ordinary signature $\sigma(W_F)=0$, so the defect satisfies
\[\sigma_{\omega}(M_K) = \sigma(\lambda_{\omega} (W_F)) - \sigma(W_F) = \sigma_{\omega}(K).\hfill\qedhere\]
\end{proof}

\section{A lower bound on \texorpdfstring{$g_{ds}$}{the doubly slice genus}}\label{section:lower-bounds}

Let $K \subset S^3$ be an oriented knot, let $G_1, G_2 \subset D^4$ be locally flat, connected, compact, orientable, embedded surfaces with boundary $K$, such that $S= {G}_1 \cup_K {G}_2$ is an unknotted surface in $S^4$ of genus $g$.

Perform the construction described in \fullref{ex:surface} on each of $G_1$ and $G_2$ to obtain $W_1$ and $W_2$ respectively. Define
\[
V:=W_1\cup_{M_K}-W_2.
\]

Observe that $V=(S^4\sm\nu S)\cup_{\Sigma_g\times S^1} (H_g\times S^1)$, where $H_g$ denotes the 3-dimensional handlebody of genus $g$ and $\Sigma_g=\partial H_g$.

A straightforward Seifert-Van Kampen argument shows that $\pi_1(V) \cong \Z$. Various Mayer-Vietoris calculations give
\[
H_k(V;\Z)\cong\left\{\begin{array}{lll}\Z&k=0,&\\
\Z&k=1, &\text{generated by a meridian of $\Sigma_g$,}\\
\Z^{2g}&k=2, &\\
0&\text{otherwise.}&\end{array}\right.
\]

We now derive a series of technical lemmas we will use in the proof of \fullref{thm:main}

\begin{lemma}\label{lem:squarepresent}
Let $T$ be a finitely generated, torsion ${\C[\Z]}$-module, and let $\omega\in S^1\sm\{1\}$. Then $\dim_\C\Tor^{\C[\Z]}_1(T,\C_\omega)=\dim_\C (\C_\omega\otimes_{\C[\Z]} T)$.
\end{lemma}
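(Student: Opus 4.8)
The plan is to exploit the fact that $\C[\Z]\cong\C[t,t^{-1}]$ is a principal ideal domain, so that a finitely generated torsion module admits a length-one free resolution. First I would observe that, over a PID, such a module $T$ fits into a short exact sequence
\[
0 \to F_1 \xrightarrow{A} F_0 \to T \to 0,
\]
with $F_0$ and $F_1$ free of finite rank. Because $T$ is torsion, additivity of rank on this sequence forces $\operatorname{rank} F_0 = \operatorname{rank} F_1 =: n$, so $A$ is represented by a square $n\times n$ matrix with entries in $\C[\Z]$. This equality of ranks is the one place where the torsion hypothesis is genuinely used.

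Next I would apply the functor $\C_\omega\otimes_{\C[\Z]}-$ to this resolution. Since $\C_\omega\otimes_{\C[\Z]}\C[\Z]\cong\C_\omega$ and the bimodule structure is the one induced by $\alpha_\omega(t)=\omega$, the map induced by $A$ is the matrix $A(\omega)\in\Mat_{n\times n}(\C)$ obtained by evaluating each Laurent-polynomial entry at $t=\omega$. The homology of the resulting two-term complex $0 \to \C^n \xrightarrow{A(\omega)} \C^n \to 0$ computes precisely the two quantities in the statement: right-exactness of the tensor product gives $\C_\omega\otimes_{\C[\Z]}T\cong\coker A(\omega)$, while $\Tor_1^{\C[\Z]}(T,\C_\omega)\cong\ker A(\omega)$.

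The conclusion is then immediate from linear algebra over the field $\C$: for a square matrix $A(\omega)$ of size $n$, rank-nullity gives
\[
\dim_\C\ker A(\omega) = n - \operatorname{rank} A(\omega) = \dim_\C\coker A(\omega),
\]
so the two dimensions agree. As $\C[\Z]$ is commutative, the left/right distinction and the involution carried by $\C_\omega$ play no role in these dimension counts. One could equivalently run the argument summand by summand through the cyclic decomposition $T\cong\bigoplus_i\C[\Z]/(f_i)$, where each factor contributes the kernel and cokernel of multiplication by the scalar $f_i(\omega)$ on $\C$; there the two dimensions are visibly both $1$ when $f_i(\omega)=0$ and both $0$ otherwise.

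I do not anticipate a serious obstacle. The substantive input is purely structural—the PID property of $\C[t,t^{-1}]$ yielding a square presentation matrix for a torsion module—after which the statement reduces to the elementary fact that an endomorphism of a finite-dimensional vector space has kernel and cokernel of equal dimension.
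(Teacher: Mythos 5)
Your proof is correct and follows essentially the same route as the paper: both use the structure theorem over the PID $\C[t,t^{-1}]$ to obtain a square presentation $0 \to F_1 \xrightarrow{A} F_0 \to T \to 0$, tensor with $\C_\omega$, and compare dimensions. The paper phrases the final step as the vanishing Euler characteristic of the resulting exact sequence, while you invoke rank--nullity for the square matrix $A(\omega)$; these are the same computation.
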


\begin{proof}
By the structure theorem for finitely generated modules over a principal ideal domain, there exists an injective map $A\colon P_1\hookrightarrow P_0$ such that $T\cong P_0/A(P_1)$ and so that $P_1, P_0$ are free ${\C[\Z]}$-modules of the same rank. The functor $\C_\omega\otimes_{\C[\Z]}-$ induces an exact sequence
\[
\Tor_1^{\C[\Z]}(P_0,\C_\omega)\to \Tor_1^{\C[\Z]}(T,\C_\omega) \to \C_\omega\otimes_{\C[\Z]} P_1\xrightarrow{\Id \otimes A} \C_\omega\otimes_{\C[\Z]} P_0\to \C_\omega\otimes_{\C[\Z]} T\to 0.
\]
The leftmost term is 0 because $P_0$ is free. As $P_1$ and $P_0$ have the same free rank, $\C_\omega\otimes_{\C[\Z]} P_1$ and $\C_\omega\otimes_{\C[\Z]} P_0$ have the same complex dimension. The sequence has vanishing Euler characteristic because it is exact, so the claimed result follows.
\end{proof}

\begin{lemma}\label{lem:littlefact} For a space $X$ over $\Z$ with $H_0(X;{\C[\Z]})\cong\C$, and for $\omega\in S^1\sm\{1\}$ we have
\[
 H_0(X;\C_\omega)=0 \quad\text{and}\quad H_1(X;\C_\omega)\cong \C_\omega\otimes_{\C[\Z]}H_1(X;\C[\Z]).
\]
\end{lemma}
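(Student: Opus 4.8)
The plan is to compute the twisted homology $H_0(X;\C_\omega)$ and relate $H_1(X;\C_\omega)$ to $H_1(X;\C[\Z])$ using the universal coefficient spectral sequence, or more elementarily, by a change-of-rings argument applied to the chain complex. Since $\C_\omega = \C$ with the $\C[\Z]$-module structure coming from $\alpha_\omega(t)=\omega$, the key algebraic input is that $\C_\omega \cong \C[\Z]/(t-\omega)$ as a $\C[\Z]$-module, so tensoring with $\C_\omega$ is tensoring with the quotient by the principal ideal $(t-\omega)$.

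First I would invoke the universal coefficient theorem for the change of coefficients along $\alpha_\omega\colon \C[\Z]\to\C_\omega$. For each $r$ there is a short exact sequence
\[
0 \to \C_\omega\otimes_{\C[\Z]} H_r(X;\C[\Z]) \to H_r(X;\C_\omega) \to \Tor_1^{\C[\Z]}(\C_\omega, H_{r-1}(X;\C[\Z])) \to 0,
\]
valid because $\C[\Z]$ is a PID, hence of global dimension one, so only the $\Tor_0$ and $\Tor_1$ terms survive. This sequence converts the statement into a purely algebraic computation about the $\C[\Z]$-modules $H_0$ and $H_1$.

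Next I would treat $H_0(X;\C_\omega)$. By hypothesis $H_0(X;\C[\Z])\cong\C$, meaning $\C$ with the trivial $\C[\Z]$-action where $t$ acts as the identity; equivalently it is $\C[\Z]/(t-1)$. For $r=0$ the UCT sequence reads $H_0(X;\C_\omega)\cong\C_\omega\otimes_{\C[\Z]}H_0(X;\C[\Z])$, since the $\Tor_1$ term involves $H_{-1}=0$. Now $\C_\omega\otimes_{\C[\Z]}\C[\Z]/(t-1) \cong \C_\omega/(t-1)\C_\omega$, and on $\C_\omega$ the element $t-1$ acts as multiplication by $\omega-1$, which is a unit precisely because $\omega\neq 1$. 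Hence this tensor product vanishes, giving $H_0(X;\C_\omega)=0$. I would also observe the symmetric computation $\Tor_1^{\C[\Z]}(\C_\omega, H_0(X;\C[\Z]))=0$ by the same unit argument, since $t-1$ acts invertibly on $\C_\omega$.

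Finally, for $H_1(X;\C_\omega)$ the UCT sequence becomes
\[
0 \to \C_\omega\otimes_{\C[\Z]} H_1(X;\C[\Z]) \to H_1(X;\C_\omega) \to \Tor_1^{\C[\Z]}(\C_\omega, H_0(X;\C[\Z])) \to 0,
\]
and since the rightmost $\Tor_1$ term was just shown to vanish, the left map is an isomorphism $H_1(X;\C_\omega)\cong \C_\omega\otimes_{\C[\Z]}H_1(X;\C[\Z])$, as claimed. I expect no serious obstacle here; the only point requiring care is justifying the universal coefficient sequence in the twisted setting, for which the fact that $\C[\Z]$ is a PID (so that $\Tor_{\geq 2}$ vanishes and projective resolutions have length one) is exactly what is needed. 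The crux is simply the repeated observation that $\omega\neq 1$ makes $t-1$ act invertibly on $\C_\omega$, killing both the $\C_\omega\otimes\C$ and $\Tor_1(\C_\omega,\C)$ contributions coming from $H_0$.
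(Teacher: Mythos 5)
Your proof is correct and follows essentially the same route as the paper: both apply the Universal Coefficient Theorem over the PID $\C[\Z]$ and then kill both $H_0$-contributions using the fact that $t-1$ acts on $\C_\omega$ as the unit $\omega-1$. The only difference is cosmetic: where you compute $\Tor_1^{\C[\Z]}(\C_\omega,\C[\Z]/(t-1))=0$ directly from the two-term free resolution, the paper deduces the Tor vanishing from the tensor vanishing by citing \fullref{lem:squarepresent}, which equates $\dim_\C\Tor_1$ and $\dim_\C\Tor_0$ for finitely generated torsion $\C[\Z]$-modules.
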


\begin{proof}
First, $\C\cong\C[t,t^{-1}]/(t-1)$ as a $\C[\Z]$-module, so that $\C_\omega\otimes_{\C[\Z]}\C=0$ since $\omega\neq 1$. This immediately gives $\C_\omega\otimes_{\C[\Z]} H_0(X;\C[\Z])=0$ and by \fullref{lem:squarepresent} we also have  $\Tor^{\C[\Z]}_1(H_0(X;{\C[\Z]}),\C_\omega)=0$. The result now follows from the Universal Coefficient Theorem.
\end{proof}

\begin{lemma}\label{lem:MVcrop}
With $V=W_1\cup_{M_K}-W_2$ as described above and $\omega\in S^1\setminus\{1\}$,
\[
H_1(V;\Cz)=0,\qquad H_3(V;\Cz)=0,\qquad \text{ and }\qquad \dim_{\C} H_2(V;\Cz) = 2g,
\]
so that the Mayer-Vietoris sequence for $V$ with $\mathbb{C}_\omega$ coefficients becomes
\[
0 \to H_2(M_K) \xrightarrow{} H_2(W_1) \oplus H_2(W_2) \to
\C^{2g}
\to H_1(M_K) \xrightarrow{} H_1(W_1) \oplus H_1(W_2) \to 0.
\]
\end{lemma}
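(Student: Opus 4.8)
The plan is to compute the $\C_\omega$-homology of the closed $4$-manifold $V$ in every degree and then substitute these values into the Mayer--Vietoris sequence of the decomposition $V = W_1 \cup_{M_K} -W_2$. The key preliminary observation is that, as recorded above, $\pi_1(V) \cong \Z$, and the homomorphism $\pi_1(V) \to \Z$ defining the $\Z$-cover carries the meridional generator to a generator, hence is an isomorphism. Therefore the cover used to define the twisted (co)homology is the universal cover $\widetilde V$, which is connected and simply connected. In particular $H_0(V;\C[\Z]) = H_0(\widetilde V;\C) \cong \C$ and, by the Hurewicz theorem, $H_1(V;\C[\Z]) = H_1(\widetilde V;\C) = 0$.

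Granting this, the low-degree vanishing is immediate. Since $H_0(V;\C[\Z]) \cong \C$, \fullref{lem:littlefact} applies and gives both $H_0(V;\C_\omega) = 0$ and $H_1(V;\C_\omega) \cong \C_\omega \otimes_{\C[\Z]} H_1(V;\C[\Z])$; as the latter tensor factor vanishes, $H_1(V;\C_\omega) = 0$, which is the first assertion.

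To reach the high degrees I would invoke Poincar\'e duality. Because $V$ is closed and oriented of dimension $4$, the duality isomorphism $PD\colon H^{4-k}(V;\C_\omega) \xrightarrow{\ \cong\ } H_k(V;\C_\omega)$ from \fullref{sec:signature-defects} holds, and the Universal Coefficient Theorem over the field $\C$ (incorporating the conjugation built into the definition of $H^*$) yields $\dim_\C H^{4-k}(V;\C_\omega) = \dim_\C H_{4-k}(V;\C_\omega)$. Composing gives $\dim_\C H_k(V;\C_\omega) = \dim_\C H_{4-k}(V;\C_\omega)$ for all $k$. Taking $k=3$ and using $H_1(V;\C_\omega)=0$ forces $H_3(V;\C_\omega)=0$, the second assertion; taking $k=4$ and using $H_0(V;\C_\omega)=0$ gives $H_4(V;\C_\omega)=0$. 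For the middle dimension I would compare Euler characteristics: the alternating sum $\sum_k (-1)^k \dim_\C H_k(V;\C_\omega)$ equals the ordinary Euler characteristic $\chi(V)=2g$ (read off from the integral homology of $V$ above), since for a finite CW complex $\dim_\C C_k(V;\C_\omega)$ is just the number of $k$-cells and hence independent of the coefficient system. As $H_0, H_1, H_3, H_4$ all vanish with $\C_\omega$ coefficients, this leaves $\dim_\C H_2(V;\C_\omega) = 2g$, the third assertion.

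It remains to read off the sequence. Writing the Mayer--Vietoris sequence of $V = W_1 \cup_{M_K} -W_2$ with $\C_\omega$ coefficients throughout, the relevant stretch runs
\[
H_3(V) \to H_2(M_K) \to H_2(W_1)\oplus H_2(W_2) \to H_2(V) \to H_1(M_K) \to H_1(W_1)\oplus H_1(W_2) \to H_1(V).
\]
Substituting $H_3(V) = 0$ on the left, $H_2(V) = \C^{2g}$ in the middle, and $H_1(V) = 0$ on the right produces exactly the claimed five-term exact sequence. The main obstacle is the careful handling of the Poincar\'e duality step: one must check that the $\C_\omega$-coefficient duality isomorphism and the Universal Coefficient Theorem are applied with the conjugations matched, so that the dimension identity $\dim_\C H_k = \dim_\C H_{4-k}$ is genuinely valid; everything else is formal once $H_0$ and $H_1$ are identified from the universal cover. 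Should one wish to sidestep duality, an alternative is to compute $H_*(V;\C[\Z])$ directly by a Mayer--Vietoris argument and then pass to $\C_\omega$ coefficients degree by degree using \fullref{lem:squarepresent} and the Universal Coefficient Theorem, but the duality route is shorter.
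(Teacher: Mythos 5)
Your proof is correct and follows essentially the same structure as the paper's: deduce $H_1(V;\C[\Z])=0$ from $\pi_1(V)\cong\Z$, apply \fullref{lem:littlefact} in low degrees, kill degrees $3$ and $4$ by Poincar\'e duality, and pin down $\dim_\C H_2(V;\Cz)=2g$ by comparing the $\Cz$-Euler characteristic with the integral one before substituting into Mayer--Vietoris. The only deviation is the step $H_3(V;\Cz)=0$: the paper computes $H^1(V;\Cz)\cong\Ext^1_{\C[\Z]}(H_0(V;\C[\Z]),\Cz)=0$ explicitly from the resolution $0\to\C[\Z]\xrightarrow{t-1}\C[\Z]\to\C\to 0$, whereas you invoke the field-coefficient dimension equality $\dim_\C H^1(V;\Cz)=\dim_\C H_1(V;\Cz)$ -- a legitimate shortcut (conjugate-transposing matrices over $\C$ preserves rank), and one the paper itself relies on in the proof of \fullref{lem:H3}.
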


\begin{proof}
Consider that $\C_\omega\otimes_{\C[\Z]} H_1(V;{\C[\Z]})=0$ since $\pi_1(V) \cong \Z$ implies $H_1(V;{\C[\Z]})=0$.  Since $H_0(V;{\C[\Z]})\cong \C$ and $\omega \neq 1$, this combines with \fullref{lem:littlefact} to give $H_1(V;\Cz)=0$.

Next, we have $H_3(V;\Cz)\cong H^1(V;\Cz)$ by Poincar\'{e} duality. By the Universal Coefficient Theorem for cohomology, $H^1(V;\Cz)\cong \Ext^1_{\C[\Z]}(H_0(V;\C[\Z]),\C_\omega)$. The projective $\C[\Z]$-module resolution \[0\to \C[\Z]\xrightarrow{f} \C[\Z]\to H_0(V;\C[\Z])\to 0,\] where $f\colon p(t)\mapsto (t-1)p(t)$, can be used to compute
\[
\Ext^1_{\C[\Z]}(H_0(V;\C[\Z]),\C_\omega)=\coker(\Hom_{\C[\Z]}(\C[\Z],\C_\omega)\xrightarrow{f^*} \Hom_{\C[\Z]}(\C[\Z],\C_\omega)).
\]
But $f^*(\phi)=(\omega-1)\phi$, and $\omega\neq 1$, so this module vanishes as required.

Using the integral homology of $V$, we compute the Euler characteristic $\chi(V) = 2g$.  We shall compute it again with $\Cz$-coefficients in order to find the dimension of $H_2(V;\Cz)$.  By \fullref{lem:littlefact} we have $H_0(V;\Cz) =0$, so also $H_4(V;\Cz)=0$ by Poincar\'{e} duality and the Universal Coefficient Theorem. Therefore $H_i(V;\Cz)=0$ for $i \neq 2$, and we have
\[2g = \chi(V) = \chi^{\Cz}(V) = \dim_{\C} H_2(V;\Cz).\hfill\qedhere\]
\end{proof}

\begin{lemma}\label{lem:H3}
For $i=1,2$ there is equality
\[
\dim_\C \im(H_2(M_K;\C_\omega)\to H_2(W_i;\C_\omega)) = \dim_\C H_2(M_K;\C_\omega) - \dim_\C H_1(W_i;\Cz).
\]
\end{lemma}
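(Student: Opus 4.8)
The plan is to rewrite the claimed equality as $\dim_\C \ker(j_i) = \dim_\C H_1(W_i;\C_\omega)$, where $j_i\colon H_2(M_K;\C_\omega)\to H_2(W_i;\C_\omega)$ is the inclusion-induced map, since $\dim_\C\im(j_i) = \dim_\C H_2(M_K;\C_\omega) - \dim_\C\ker(j_i)$. To get at $\ker(j_i)$ I would use the long exact sequence of the pair $(W_i,M_K)$ with $\C_\omega$ coefficients,
\[
\cdots \to H_3(W_i;\C_\omega)\to H_3(W_i,M_K;\C_\omega)\xrightarrow{\partial} H_2(M_K;\C_\omega)\xrightarrow{j_i} H_2(W_i;\C_\omega)\to\cdots,
\]
which identifies $\ker(j_i)$ with $\im(\partial)$. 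Everything then reduces to two points: first, that $H_3(W_i;\C_\omega)=0$, so that $\partial$ is injective and $\dim_\C\ker(j_i)=\dim_\C H_3(W_i,M_K;\C_\omega)$; and second, that $\dim_\C H_3(W_i,M_K;\C_\omega)=\dim_\C H_1(W_i;\C_\omega)$.

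For the vanishing $H_3(W_i;\C_\omega)=0$, which I expect to be the crux, I would not argue on $W_i$ directly but instead read it off the full Mayer--Vietoris sequence of $V=W_1\cup_{M_K}-W_2$ already in play for \fullref{lem:MVcrop}. In degree three this sequence contains
\[
H_3(M_K;\C_\omega)\to H_3(W_1;\C_\omega)\oplus H_3(W_2;\C_\omega)\to H_3(V;\C_\omega).
\]
The right-hand term vanishes by \fullref{lem:MVcrop}. The left-hand term vanishes too: as $M_K$ is a closed oriented $3$-manifold, Poincar\'e duality gives $H_3(M_K;\C_\omega)\cong H^0(M_K;\C_\omega)$, and the latter has the same dimension as $H_0(M_K;\C_\omega)$, which is $0$ by \fullref{lem:littlefact} since $H_0(M_K;\C[\Z])\cong\C$ and $\omega\neq 1$. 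Exactness then forces $H_3(W_1;\C_\omega)\oplus H_3(W_2;\C_\omega)=0$, hence $H_3(W_i;\C_\omega)=0$ for each $i$.

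For the dimension count I would invoke Poincar\'e--Lefschetz duality for the compact oriented $4$-manifold $W_i$ with boundary $M_K$, giving $H_3(W_i,M_K;\C_\omega)\cong H^1(W_i;\C_\omega)$. It then remains to note $\dim_\C H^1(W_i;\C_\omega)=\dim_\C H_1(W_i;\C_\omega)$. This is the universal-coefficient statement over the field $\C$: the cochain complex $\Hom_{\C[\Z]}(\overline{C_*(\widetilde{W_i};\C)},\C_\omega)$ computing $H^*(W_i;\C_\omega)$ is, degreewise, the conjugate-transpose $\C$-dual of the chain complex $\C_\omega\otimes_{\C[\Z]}C_*(\widetilde{W_i};\C)$ computing $H_*(W_i;\C_\omega)$, and dualising a finite complex of finite-dimensional $\C$-vector spaces preserves the dimension of (co)homology in each degree. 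Alternatively one can run the twisted universal coefficient sequence, using that $\Ext^1_{\C[\Z]}(H_0(W_i;\C[\Z]),\C_\omega)=\Ext^1_{\C[\Z]}(\C,\C_\omega)=0$ as $\omega\neq 1$, together with \fullref{lem:littlefact} and \fullref{lem:squarepresent} to match $\dim_\C\Hom$ with $\dim_\C(\C_\omega\otimes-)$.

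Combining the steps gives $\dim_\C\ker(j_i)=\dim_\C H_3(W_i,M_K;\C_\omega)=\dim_\C H^1(W_i;\C_\omega)=\dim_\C H_1(W_i;\C_\omega)$, which is the desired equality after subtracting from $\dim_\C H_2(M_K;\C_\omega)$. The only genuinely delicate input is the vanishing $H_3(W_i;\C_\omega)=0$, and the point worth checking carefully is that this really is supplied by the degree-three segment of \fullref{lem:MVcrop} rather than requiring a separate computation of the surface-exterior Alexander modules of $W_i$; I would also double-check that the Poincar\'e-duality identifications are the correct ones for the twisted $\C_\omega$ coefficients governed by the involution on $\C[\Z]$.
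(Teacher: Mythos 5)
Your proof is correct, and while it shares the paper's overall skeleton, it establishes the one crucial vanishing by a genuinely different argument. Both proofs reduce the lemma to two facts: that $H_3(W_i;\C_\omega)=0$, so that the long exact sequence of $(W_i,M_K)$ begins $0\to H_3(W_i,M_K;\C_\omega)\to H_2(M_K;\C_\omega)\to H_2(W_i;\C_\omega)$, and that $\dim_\C H_3(W_i,M_K;\C_\omega)=\dim_\C H_1(W_i;\C_\omega)$ via Poincar\'e--Lefschetz duality and the universal coefficient theorem; these parts coincide. The difference is in proving $H_3(W_i;\C_\omega)=0$. The paper extracts from \fullref{lem:MVcrop} the surjectivity of $H_1(M_K;\C_\omega)\to H_1(W_i;\C_\omega)$, deduces $H_1(W_i,M_K;\C_\omega)=0$ using $H_0(M_K;\C_\omega)=0$ from \fullref{lem:littlefact}, and then dualises: $H_3(W_i;\C_\omega)\cong H^1(W_i,M_K;\C_\omega)\cong H_1(W_i,M_K;\C_\omega)=0$. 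You instead use the degree-three segment of the Mayer--Vietoris sequence for $V$, which is not displayed in \fullref{lem:MVcrop} but is standard, together with $H_3(V;\C_\omega)=0$ from that lemma and the vanishing $H_3(M_K;\C_\omega)\cong H^0(M_K;\C_\omega)=0$ coming from Poincar\'e duality on the closed $3$-manifold $M_K$ and \fullref{lem:littlefact}. Your route avoids the relative group $H_1(W_i,M_K;\C_\omega)$ and the surjectivity statement entirely, at the cost of invoking duality on $M_K$ and a portion of the Mayer--Vietoris sequence beyond what the lemma records; the paper's route stays within the displayed part of the sequence and dualises on the pair $(W_i,M_K)$ instead. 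One small point you gloss over, which the paper also elides: identifying $\dim_\C H^r$ with $\dim_\C H_r$ for $\C_\omega$-coefficients involves a conjugation $\omega\leftrightarrow\overline{\omega}$ in the dualisation, but since complex conjugation of the chain complex gives a conjugate-linear isomorphism $H_r(X;\C_\omega)\cong H_r(X;\C_{\overline{\omega}})$, all dimension counts go through unchanged.
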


\begin{proof}
The map $H_1(M_K;\C_\omega)\to H_1(W_i;\C_\omega)$ is surjective by \fullref{lem:MVcrop}. This implies $H_1(W_i,M_K;\Cz) =0$, since $H_0(M_K;\Cz) =0$ by \fullref{lem:littlefact}. Therefore \[H_3(W_i;\Cz) \cong H^1(W_i,M_K;\Cz) \cong H_1(W_i,M_K;\Cz) =0\] by Poincar\'{e} duality and the Universal Coefficient Theorem. For the same reasons, we have
\[
H_3(W_i,M_K;\Cz) \cong H^1(W_i;\Cz) \cong H_1(W_i;\Cz).
\]
Since $H_3(W_i;\Cz)=0$, the long exact sequence of the pair $(W_i,M_K)$ takes the form
\[
0 \to H_3(W_i,M_K;\Cz) \to H_2(M_K;\Cz) \to H_2(W_i;\Cz) \to \cdots.
\]
We deduce that
\begin{align*}
\dim_\C \im(H_2(M_K;\C_\omega)\to H_2(W_i;\C_\omega)) &= \dim_\C H_2(M_K;\C_\omega) - \dim_\C H_3(W_i,M_K;\Cz)\\
&= \dim_\C H_2(M_K;\C_\omega) - \dim_\C H_1(W_i;\Cz).
\end{align*}
as desired.
\end{proof}

We are now in a position to prove \fullref{thm:main}.

\begin{proof}[Proof of \fullref{thm:main}]
Fix $\omega\in S^1\setminus\{1\}$ and let $W_1$, $W_2$ be as above.
Define for $i=1, 2$,
\begin{align*}
\beta&:= \dim_\C H_2(M_K;\C_\omega),\\
n_i&:= \dim_\C   (\C_\omega\otimes_{\C[\Z]} H_2(W_i;\C[\Z])),\\
m_i&:= \dim_\C  \Tor^{\C[\Z]}_1(H_1(W_i;{\C[\Z]}),\C_\omega).
\end{align*}
By the Universal Coefficient Theorem
\[
n_i+m_i=\dim_\C H_2(W_i;\C_\omega)
\quad\text{and}\quad
\beta=\dim_\C \Tor^{\C[\Z]}_1(H_1(M_K;{\C[\Z]}),\C_\omega),
\]
where the latter equality also uses the fact that $H_2(M_K;{\C[\Z]})=0$.

The module $H_1(M_K;{\C[\Z]})$ is ${\C[\Z]}$-torsion. As $H_1(V;\C[\Z])=0$, the map $H_1(M_K;{\C[\Z]})\to H_1(W_1;{\C[\Z]})\oplus H_1(W_2;{\C[\Z]})$ in the Mayer-Vietoris sequence is surjective. Hence $H_1(W_i;{\C[\Z]})$ is torsion for $i=1,2$. By \fullref{lem:squarepresent} we deduce
\begin{align*}
\beta&= \dim_\C (\C_\omega\otimes_{\C[\Z]} H_1(M_K;{\C[\Z]})),\\
m_i&= \dim_\C ( \C_\omega\otimes_{\C[\Z]} H_1(W_i;{\C[\Z]})).
\end{align*}
For each of the spaces $X= M_K, W_1, W_2$, \fullref{lem:littlefact} implies $\C_\omega\otimes_{\C[\Z]} H_1(X;{\C[\Z]})\cong H_1(X;\C_\omega)$ so that we furthermore obtain
\begin{align*}
\beta&= \dim_\C H_1(M_K;\C_\omega),\\
m_i&= \dim_\C H_1(W_i;\C_\omega).
\end{align*}

By \fullref{ex:seifertsurface} we have $|\sigma_\omega(K)| \leq \dim_\C H_2(W_i;\C_\omega)$. However, recall that the image of $H_2(M_K;\C_\omega)\to H_2(W_i;\C_\omega)$ lies in the kernel of $\lambda_\omega(W_i)$, so that moreover
\begin{align*}
|\sigma_\omega(K)| &\leq \dim_{\C} H_2(W_i;\Cz) - \dim_\C \im(H_2(M_K;\C_\omega)\to H_2(W_i;\C_\omega)) \\
& =  \dim_{\C} H_2(W_i;\Cz) - \big(\dim_\C H_2(M_K;\C_\omega) - \dim_\C H_1(W_i;\Cz)\big) \\
 &=  (n_i+m_i)-(\beta - m_i) \\ &= n_i + 2m_i - \beta,
\end{align*}
where in the second line we have used \fullref{lem:H3}.
Taking the sum for $i=1,2$ we obtain:
\begin{align}\label{inequality-1}
2|\sigma_\omega(K)| \leq   n_1 + n_2 + 2m_1 + 2m_2 - 2\beta.\tag{$\ast$}
\end{align}

We saw in \fullref{lem:MVcrop} that $H_1(M_K;\Cz) \to H_1(W_1;\Cz) \oplus H_1(W_2;\Cz)$ is surjective, so that
\[ m_1 + m_2  \leq  \beta.\]
It follows that $2m_1 +2m_2 - 2\beta \leq 0$, so combining this with \eqref{inequality-1} we have
\begin{equation}\label{inequality-2}
 2|\sigma_\omega(K)| \leq n_1 + n_2 + 2m_1 +2m_2 - 2\beta \leq n_1 + n_2.\tag{$\dagger$}
\end{equation}

Finally, we calculate the Euler characteristic for the section of the Mayer-Vietoris sequence of $V=W_1\cup_{M_K}-W_2$ obtained in \fullref{lem:MVcrop} as
\[
0=\beta-(n_1+m_1+n_2+m_2)+2g-\beta+(m_1+m_2),
\]
so that $2g=n_1+n_2$.
Substituting into \eqref{inequality-2} yields $2|\sigma_\omega(K)| \leq 2g$ and hence $|\sigma_\omega(K)| \leq g$.

Since this is true for all $\omega\in S^1\sm\{1\}$ and all pairs of slice surfaces that glue to be unknotted, the claimed result follows.
\end{proof}

\section{Examples of band moves}\label{sec:examples}

A \emph{ribbon surface} for a knot $K \subset S^3$ is a smoothly embedded surface $\Sigma \subset D^4$ with $\partial \Sigma =K$, such that the radial function $D^4 \to [0,1]$ restricts to a Morse function on $\Sigma$ whose critical points are of index either $0$ or $1$.

\begin{definition}
  The \emph{ribbon surface band number} $b(K)$ of a knot $K$ is the minimal number of index 1 critical points, among all ribbon surfaces $\Sigma$ for $K$.
\end{definition}

The following proposition, combined with \fullref{thm:mcdonald} of McDonald, gives the promised upper bounds on $g_{ds}$ that complete the proof of \fullref{thm:examples}.

\begin{proposition}\label{prop:bandmoves} The ribbon surface band number $b(J)=1$ for each of the knots
\begin{align*}
  J \in \{& 8_{20},  10_{87}, 10_{140}, 11a28, 11a58, 11a165, 12a189, 12a377, 12a979,  12n56, \\ & 12n57,  12n62, 12n66, 12n87, 12n106, 12n288, 12n501, 12n504, 12n582, 12n670, 12n721\}.
  \end{align*}
\end{proposition}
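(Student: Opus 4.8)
The plan is to prove $b(J)=1$ by establishing the two inequalities $b(J)\ge 1$ and $b(J)\le 1$ separately.

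The lower bound holds for any nontrivial knot. If a ribbon surface $\Sigma$ for $J$ had no index~$1$ critical points, then the radial Morse function on $\Sigma$ would have only minima; tracing the associated movie, each minimum opens a small standard circle that is carried out to $S^3$ without interacting with the others, so $\Sigma$ is a disjoint union of discs and its boundary $J$ is an unlink. As $J$ is a nontrivial knot this is impossible, so $b(J)\ge 1$.

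For the upper bound I would exhibit, for each of the 21 knots, a ribbon surface with exactly one saddle. A quick Euler characteristic count shows such a surface is forced to be a disc with two minima and one saddle: with $m$ minima, one saddle, and no maxima one has $\chi(\Sigma)=m-1$, while connectedness and a single boundary component give $\chi(\Sigma)=1-2g$, forcing $g=0$ and $m=2$. Thus the task is equivalent to realising each $J$ as the result of a single fusion band attached to the $2$-component unlink, or, reading the movie in reverse, to finding one band move on a diagram of $J$ whose surgery yields the $2$-component unlink.

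Concretely, starting from a standard diagram of each $J$ taken from KnotInfo, I would search for an embedded band along which band surgery simplifies $J$ to the $2$-component unlink, and record the band as a decoration on the diagram. Each such band determines the required ribbon disc and hence certifies $b(J)\le 1$; combined with the lower bound this gives $b(J)=1$. The main obstacle is this upper bound: there is no general procedure that produces such a band, so the bands must be found by search and then certified, in particular verifying both that the input diagram really is the named knot and that the band surgery yields precisely the trivial $2$-component unlink rather than some other two-component link. I would carry out and verify this search using the Kirby calculator KLO, and present the resulting band diagrams in figures so that each unknotting band move can be checked by inspection.
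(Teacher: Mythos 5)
Your proposal takes essentially the same route as the paper: the paper's proof consists precisely of exhibiting, for each of the 21 knots, a single band move producing the 2-component unlink (found with the Kirby calculator KLO and displayed in its Figures 1 and 2), which is exactly your upper-bound strategy, while the lower bound $b(J)\ge 1$ is left implicit there since the knots are nontrivial. The only caveat is that the substantive content is the actual band diagrams themselves, which your plan correctly identifies must be found by search and certified, but does not (and blind, could not) supply.
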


\begin{proof} It suffices to exhibit a single band move on $J$ that produces a 2-component unlink. The required band moves are shown in the diagrams of \fullref{fig:1} and \fullref{fig:2}.
  \begin{figure}
  \begin{center}
    \begin{tikzpicture}
    \node[inner sep=0pt] at (0,0)
    {\includegraphics[width=.22\textwidth]{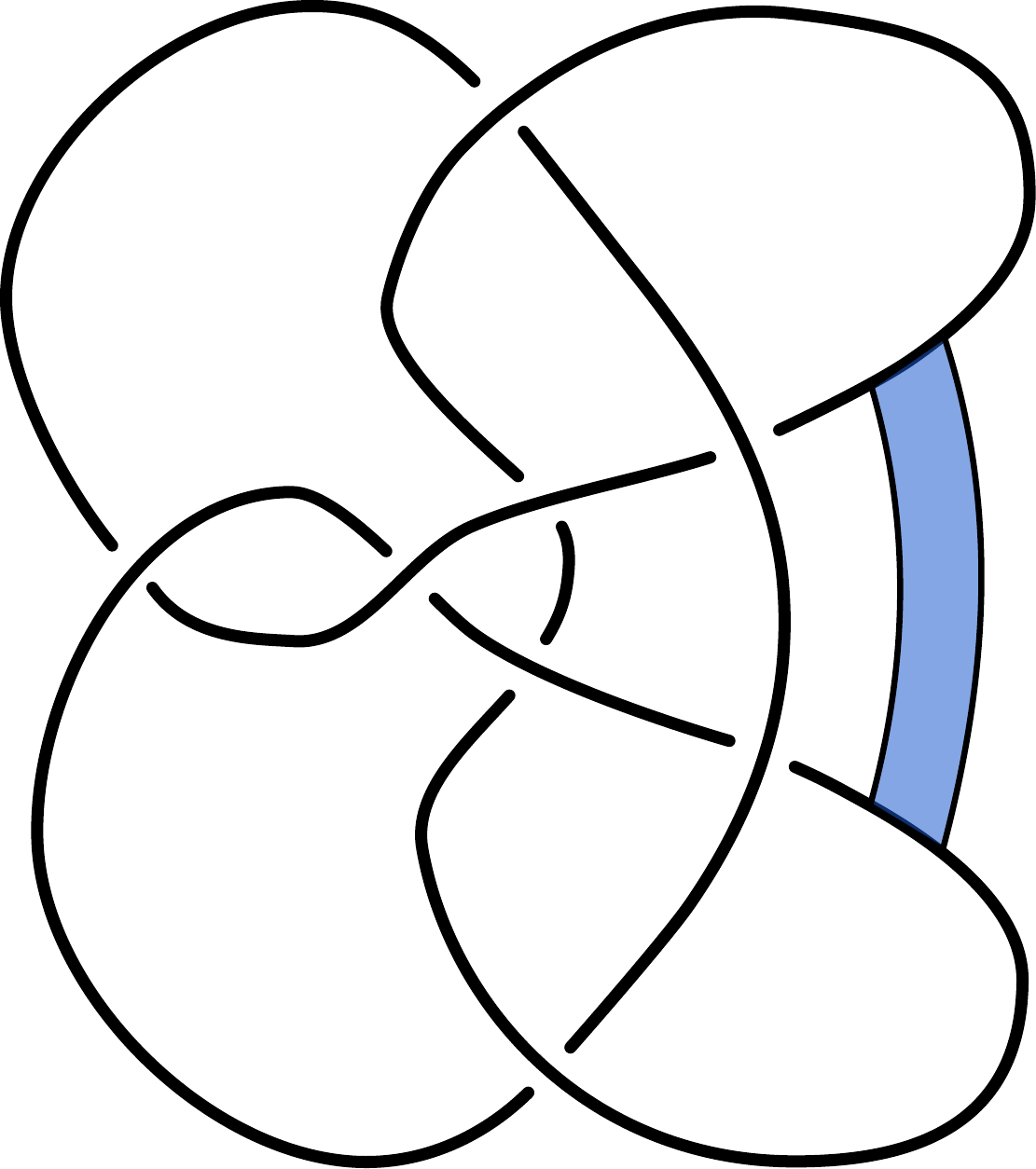}};
    \node[inner sep=0pt] at (5.5,0)
    {\includegraphics[width=.24\textwidth]{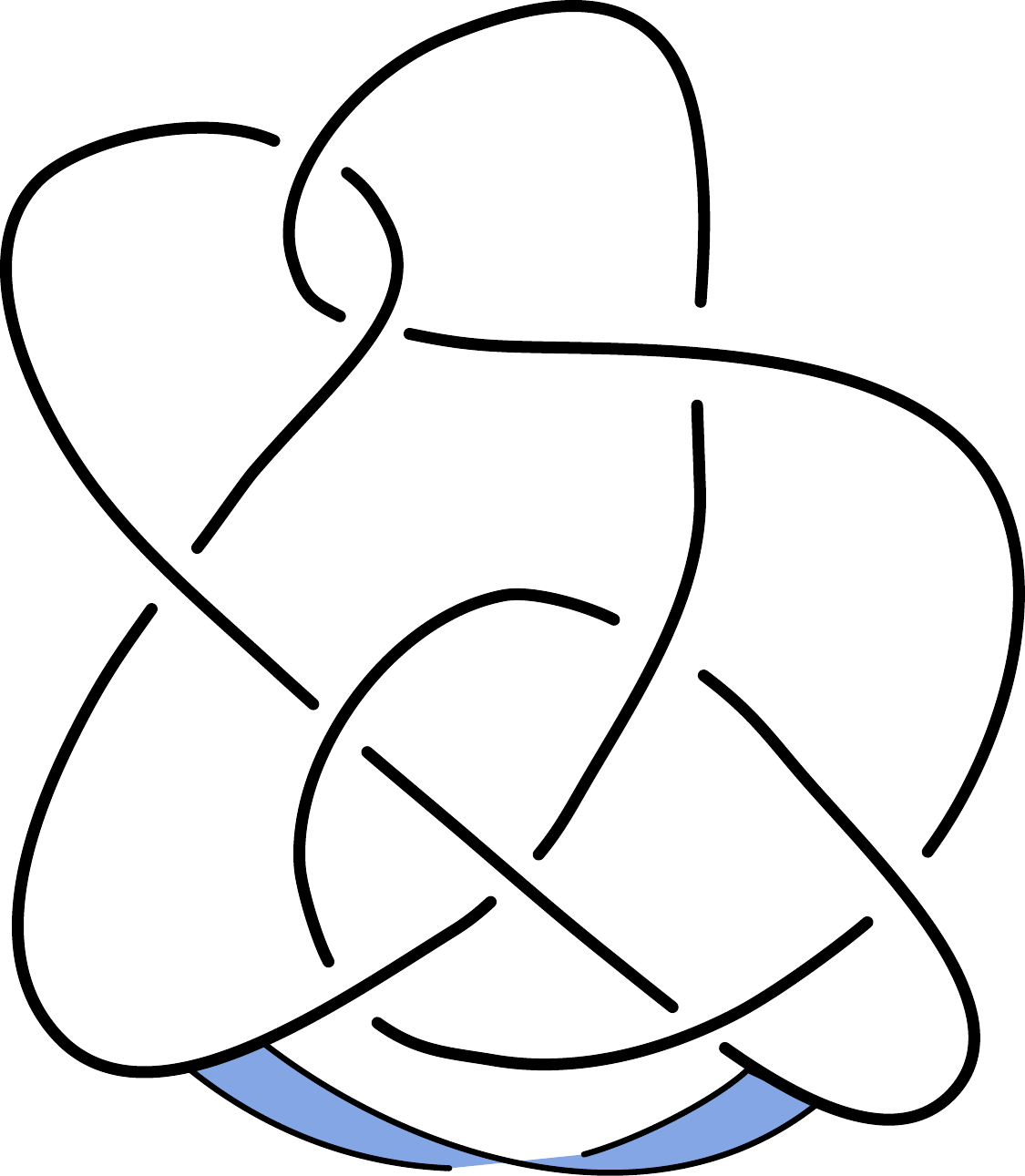}};
    \node[inner sep=0pt] at (11,0)
    {\includegraphics[width=.26\textwidth]{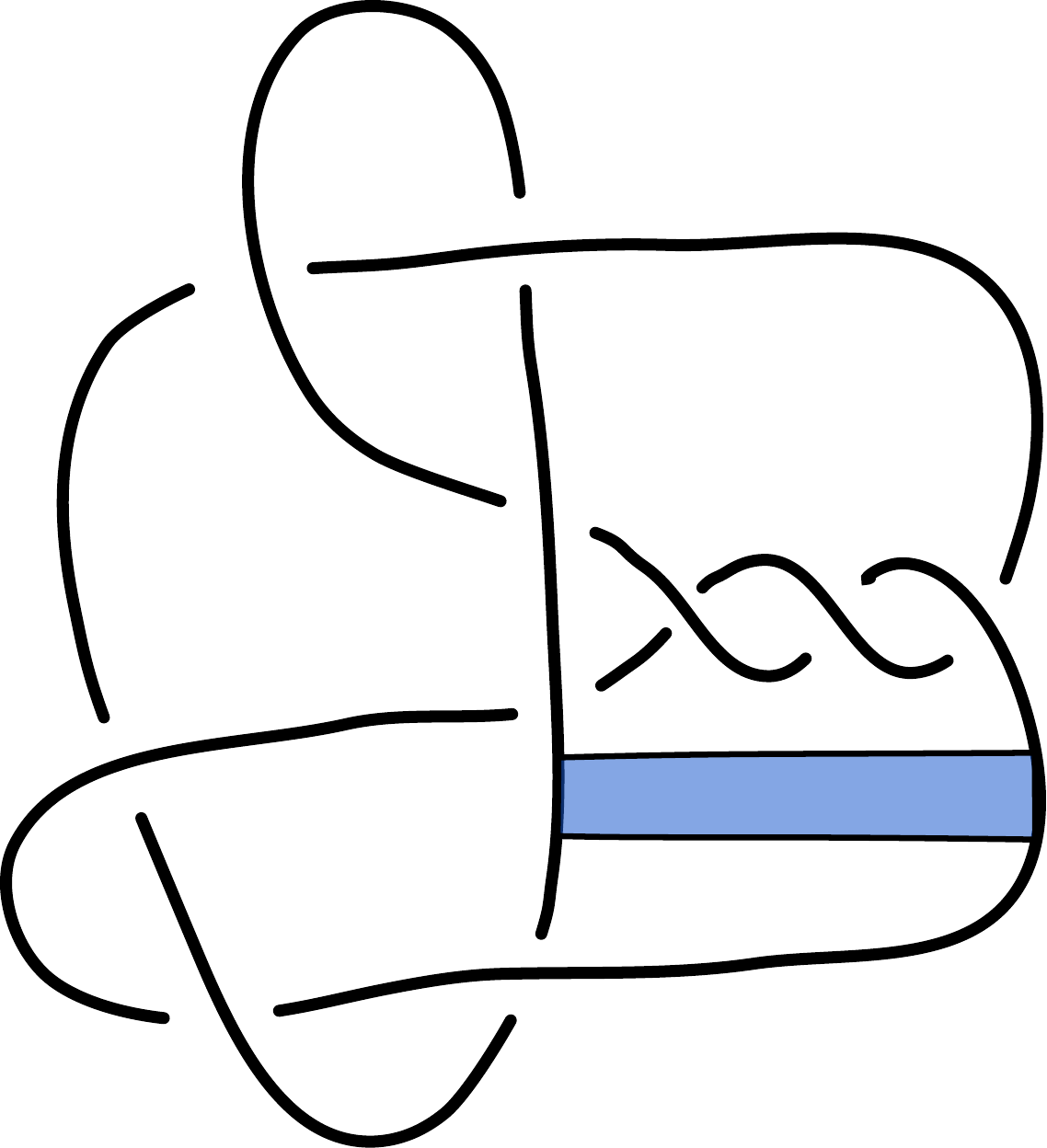}};

        \node[inner sep=0pt] at (0,-2.5) {$8_{20}$};
                \node[inner sep=0pt] at (5.5,-2.5) {$10_{87}$};
                        \node[inner sep=0pt] at (11,-2.5) {$10_{140}$};

     \node[inner sep=0pt] at (0,-5)
    {\includegraphics[width=.27\textwidth]{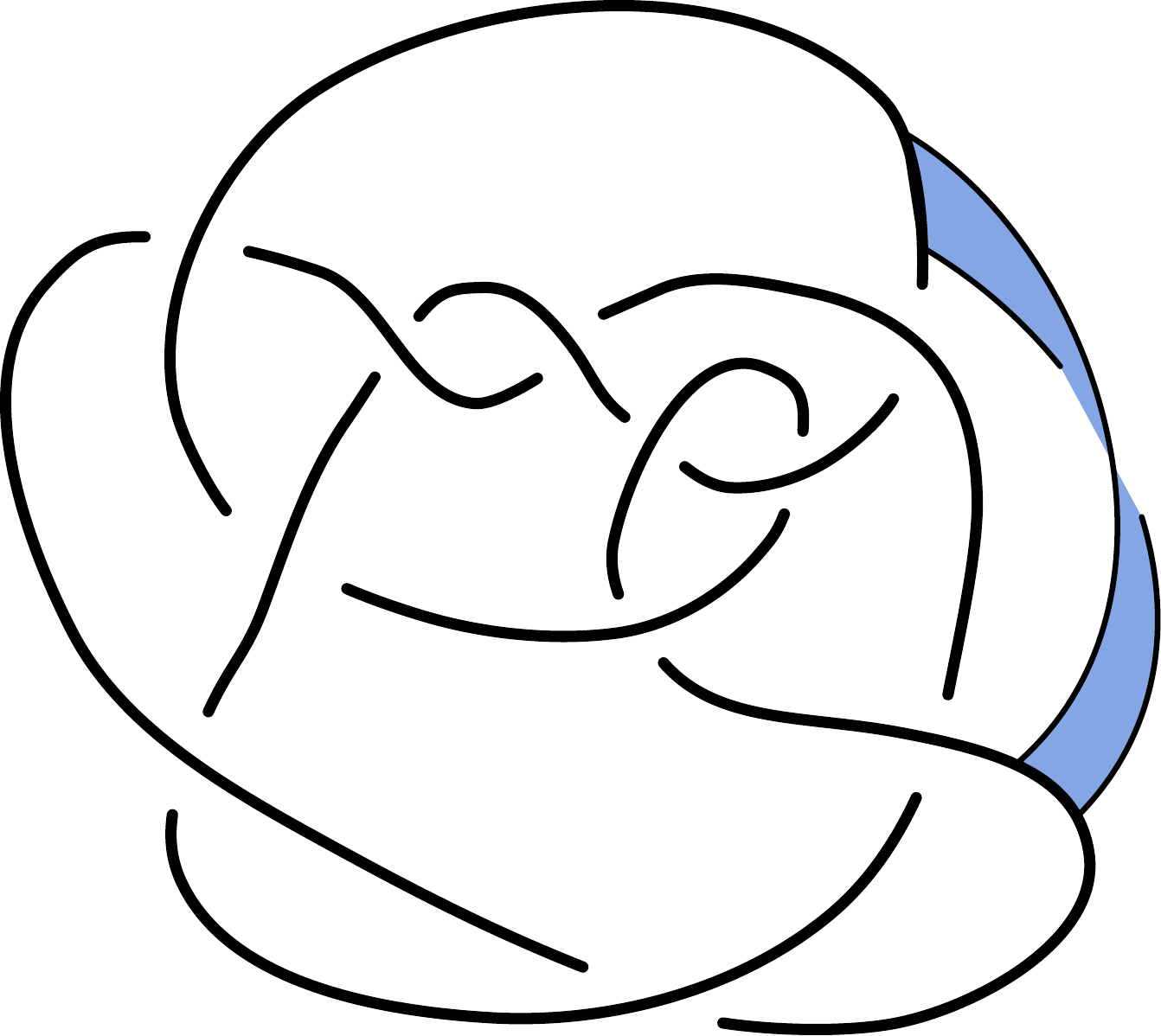}};
    \node[inner sep=0pt] at (5.5,-5)
    {\includegraphics[width=.27\textwidth]{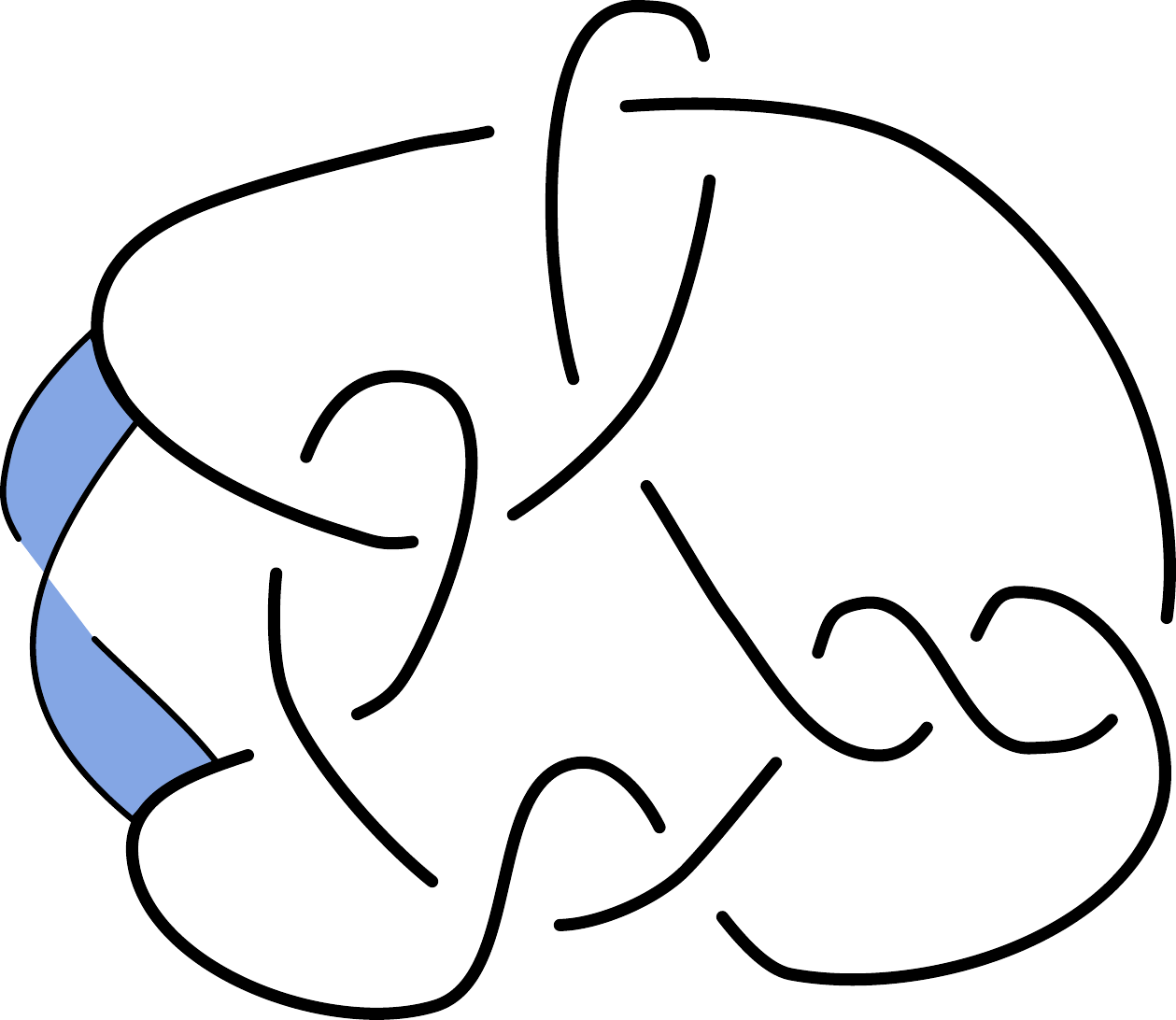}};
    \node[inner sep=0pt] at (11,-5)
    {\includegraphics[width=.27\textwidth]{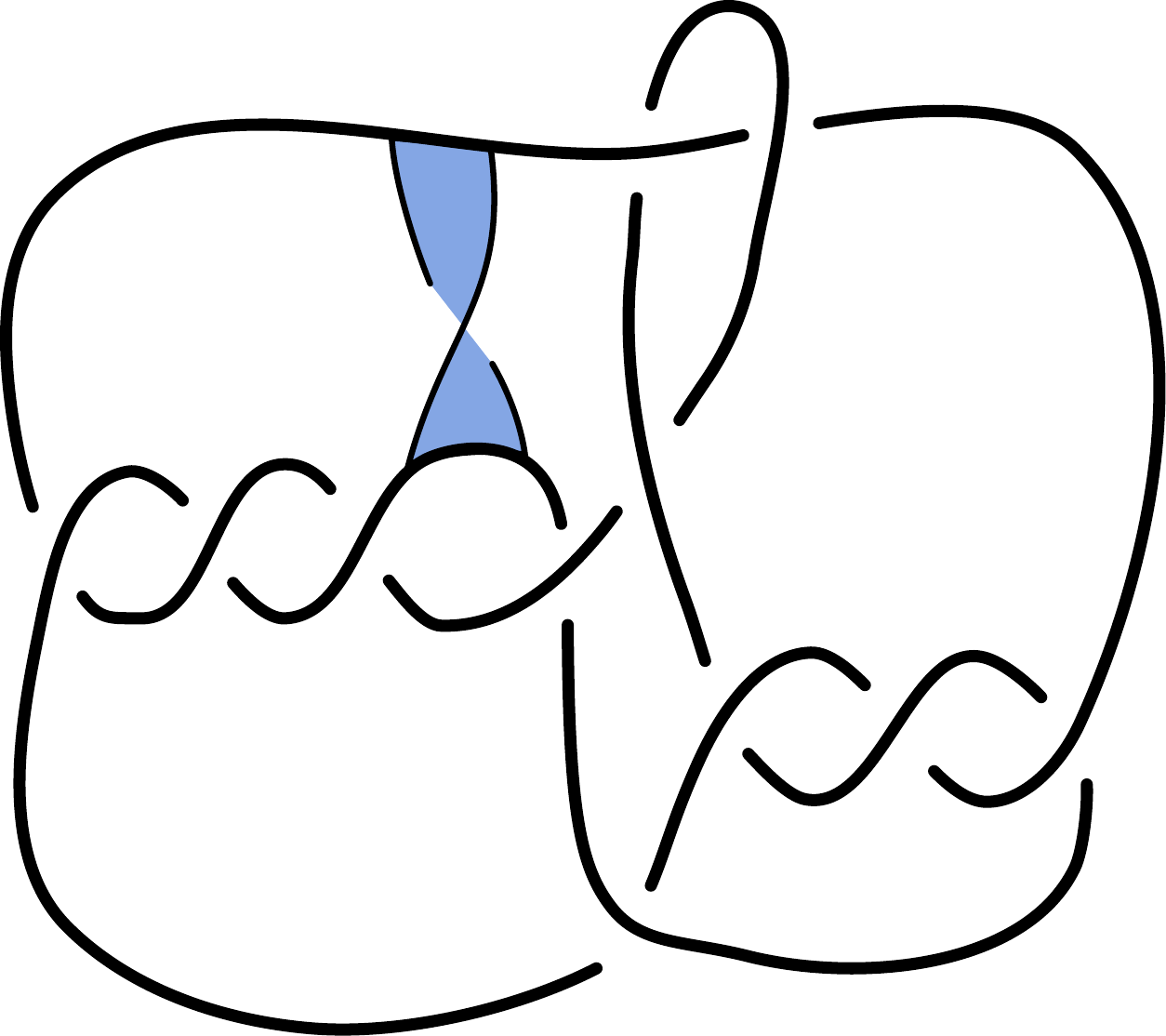}};

          \node[inner sep=0pt] at (0,-7.3) {$11a_{28}$};
                \node[inner sep=0pt] at (5.5,-7.3) {$11a_{58}$};
                        \node[inner sep=0pt] at (11,-7.3) {$11a_{165}$};

         \node[inner sep=0pt] at (0,-10.5)
    {\includegraphics[width=.31\textwidth]{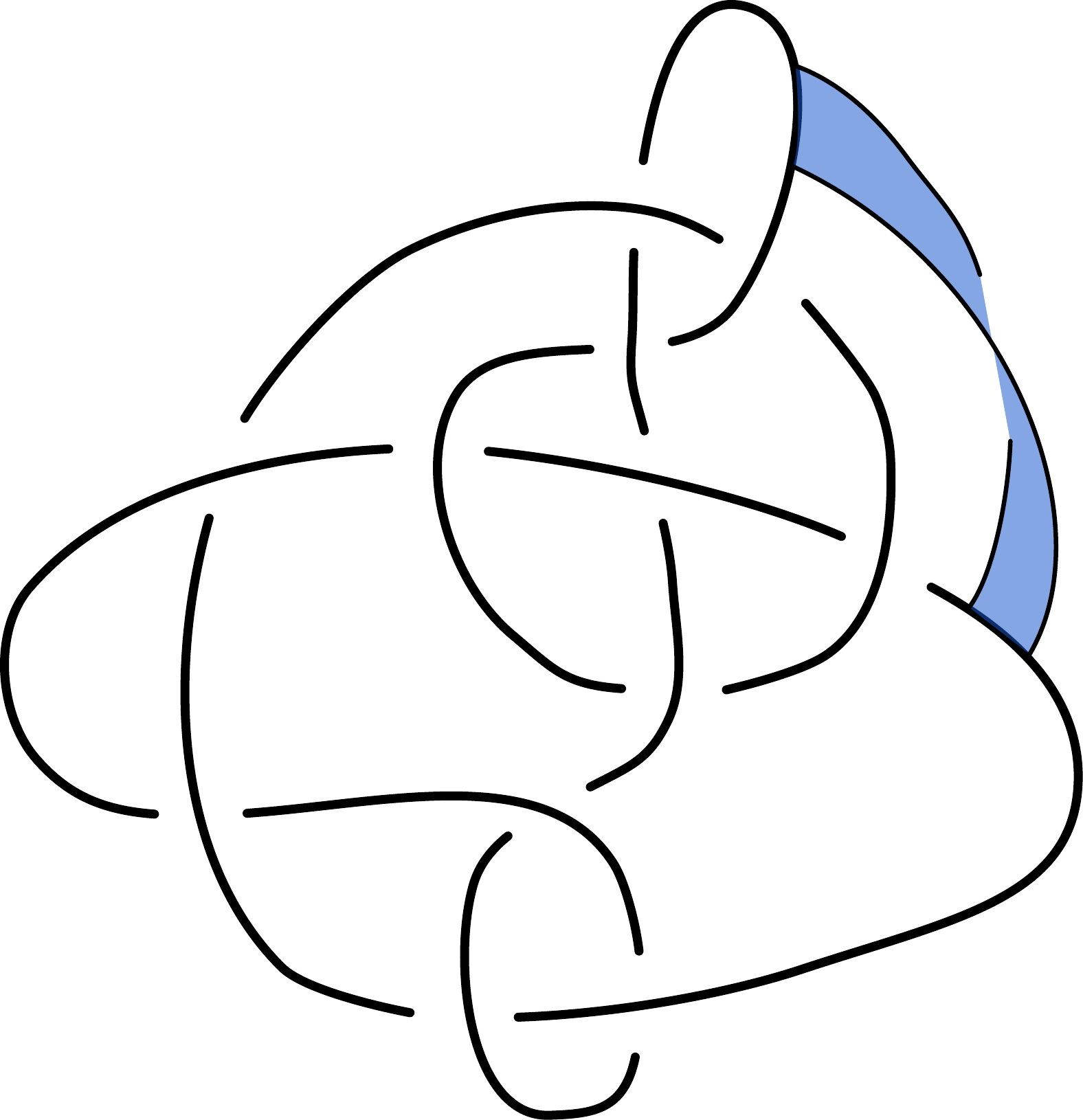}};
    \node[inner sep=0pt] at (5.5,-10.5)
    {\includegraphics[width=.24\textwidth]{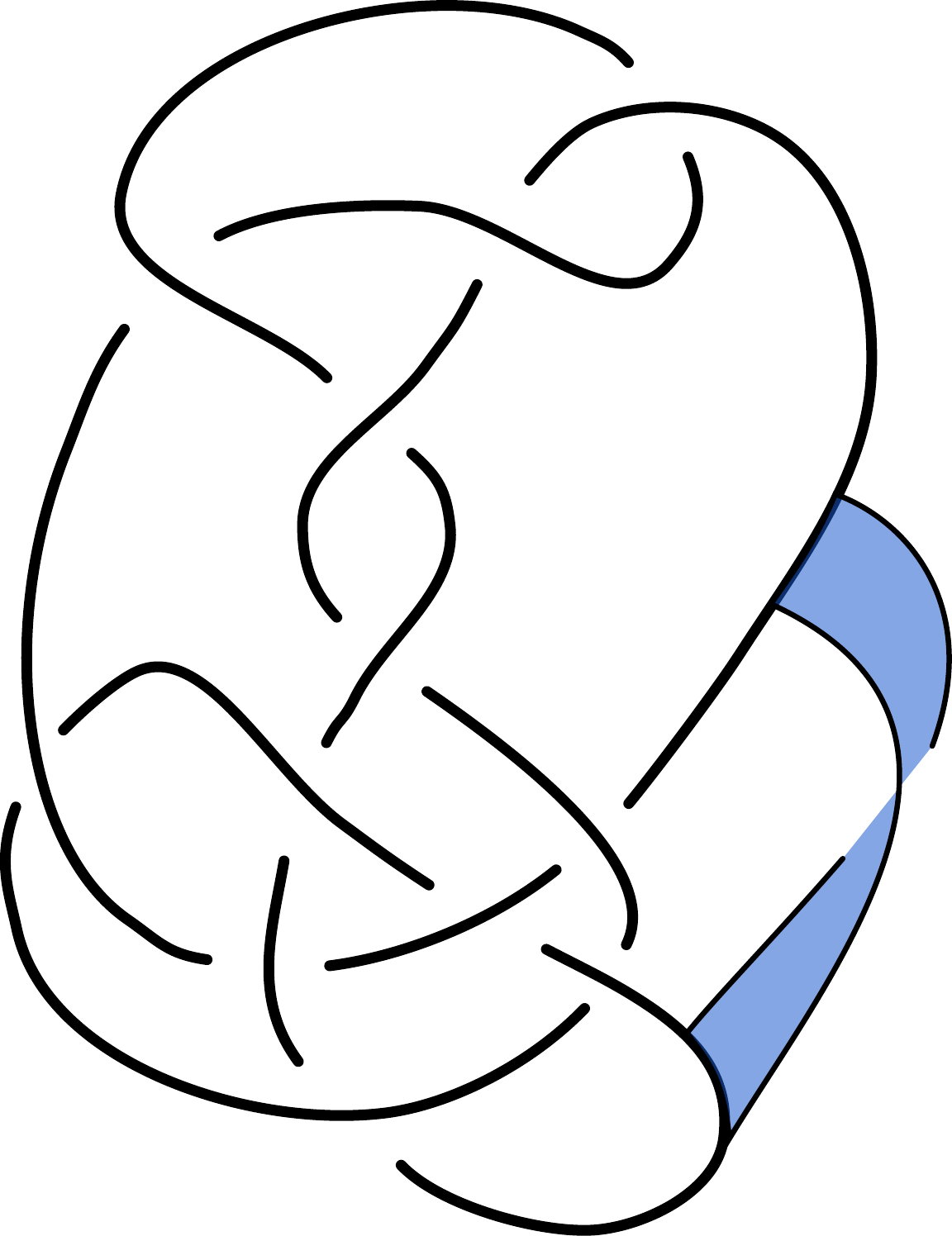}};
    \node[inner sep=0pt] at (11,-10.5)
    {\includegraphics[width=.29\textwidth]{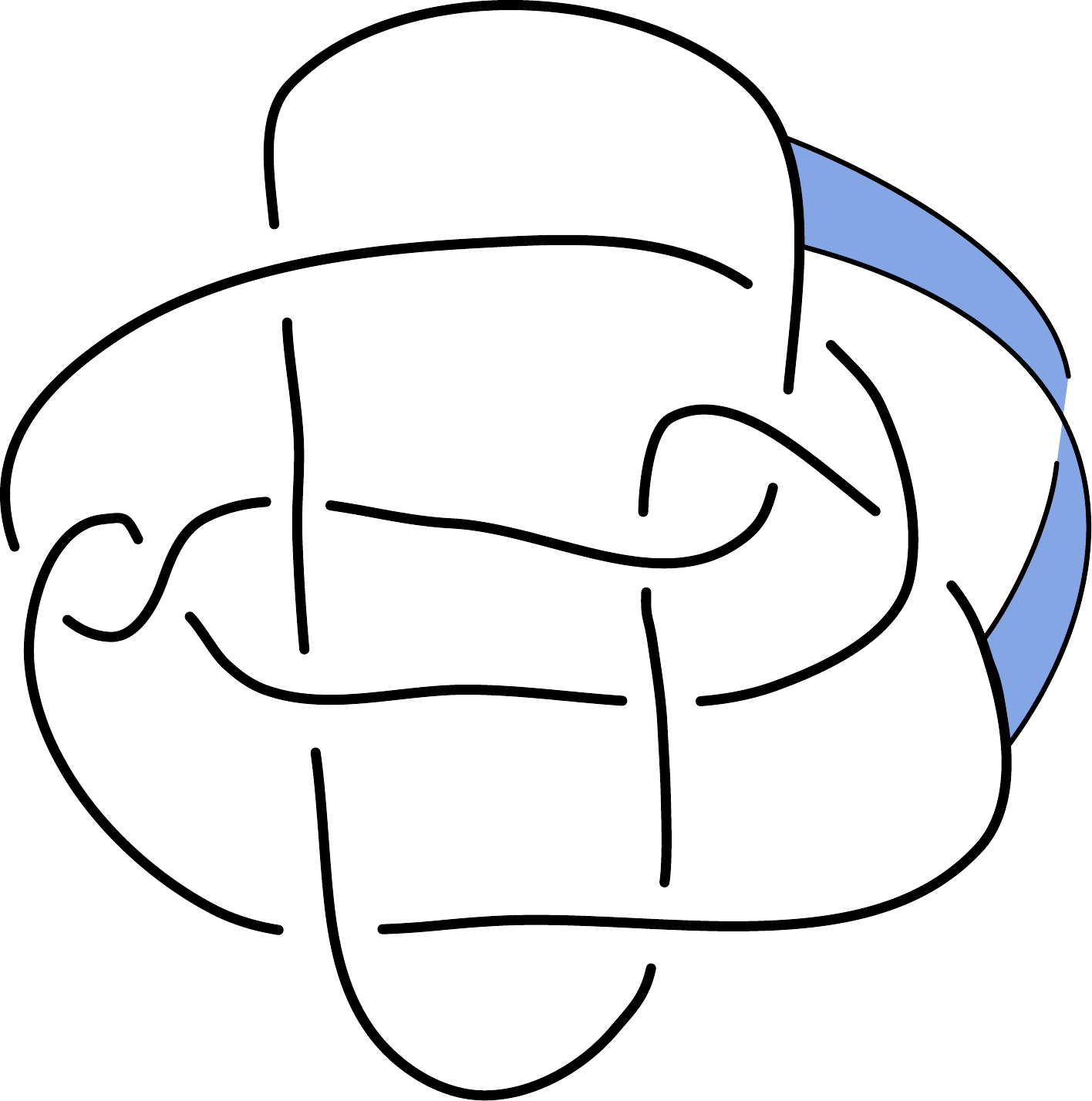}};

              \node[inner sep=0pt] at (0,-13.4) {$12a_{189}$};
                \node[inner sep=0pt] at (5.5,-13.4) {$12a_{377}$};
                        \node[inner sep=0pt] at (11,-13.4) {$12a_{979}$};

             \node[inner sep=0pt] at (0,-16)
    {\includegraphics[width=.26\textwidth]{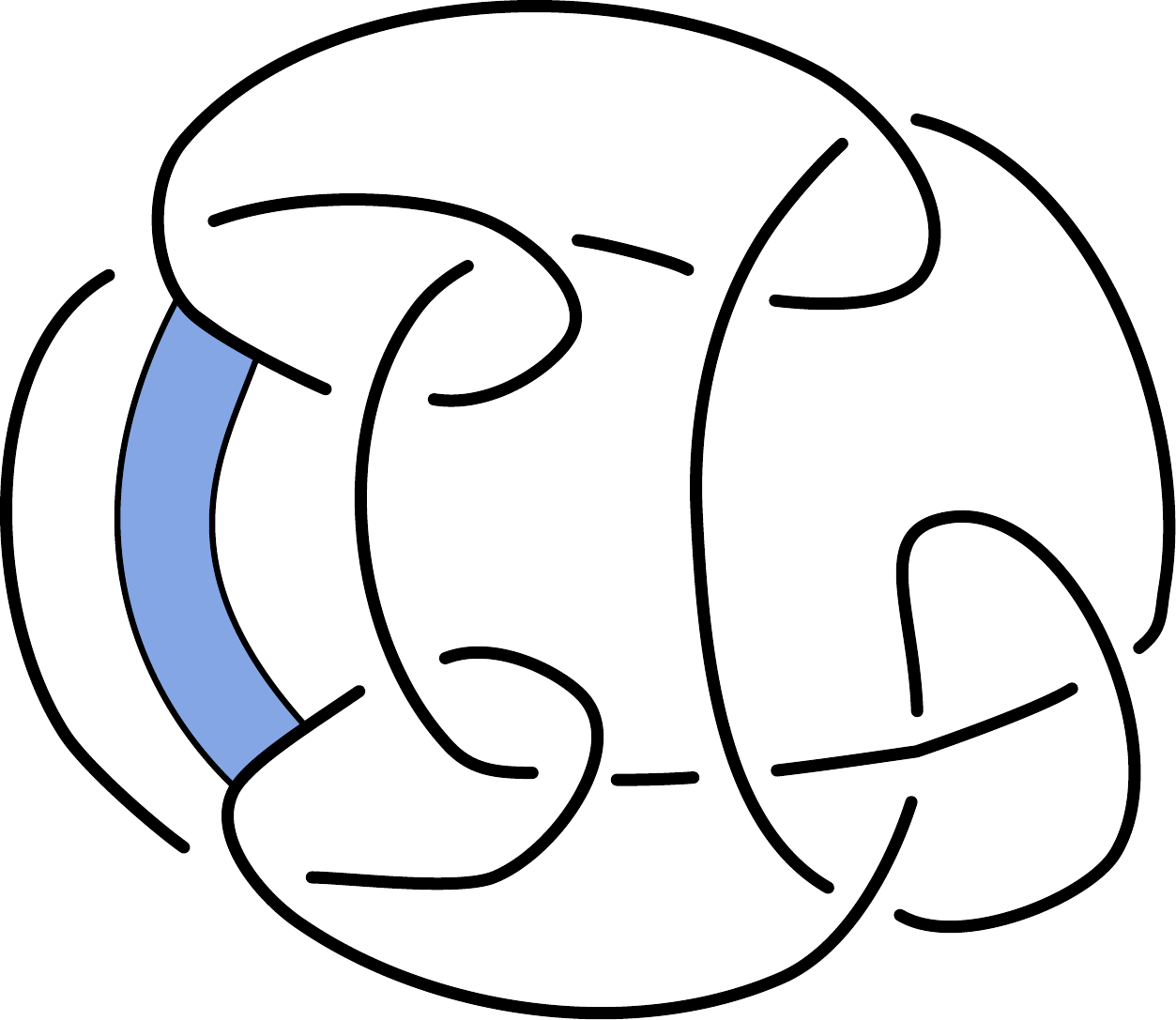}};
    \node[inner sep=0pt] at (5.5,-16)
    {\includegraphics[width=.26\textwidth]{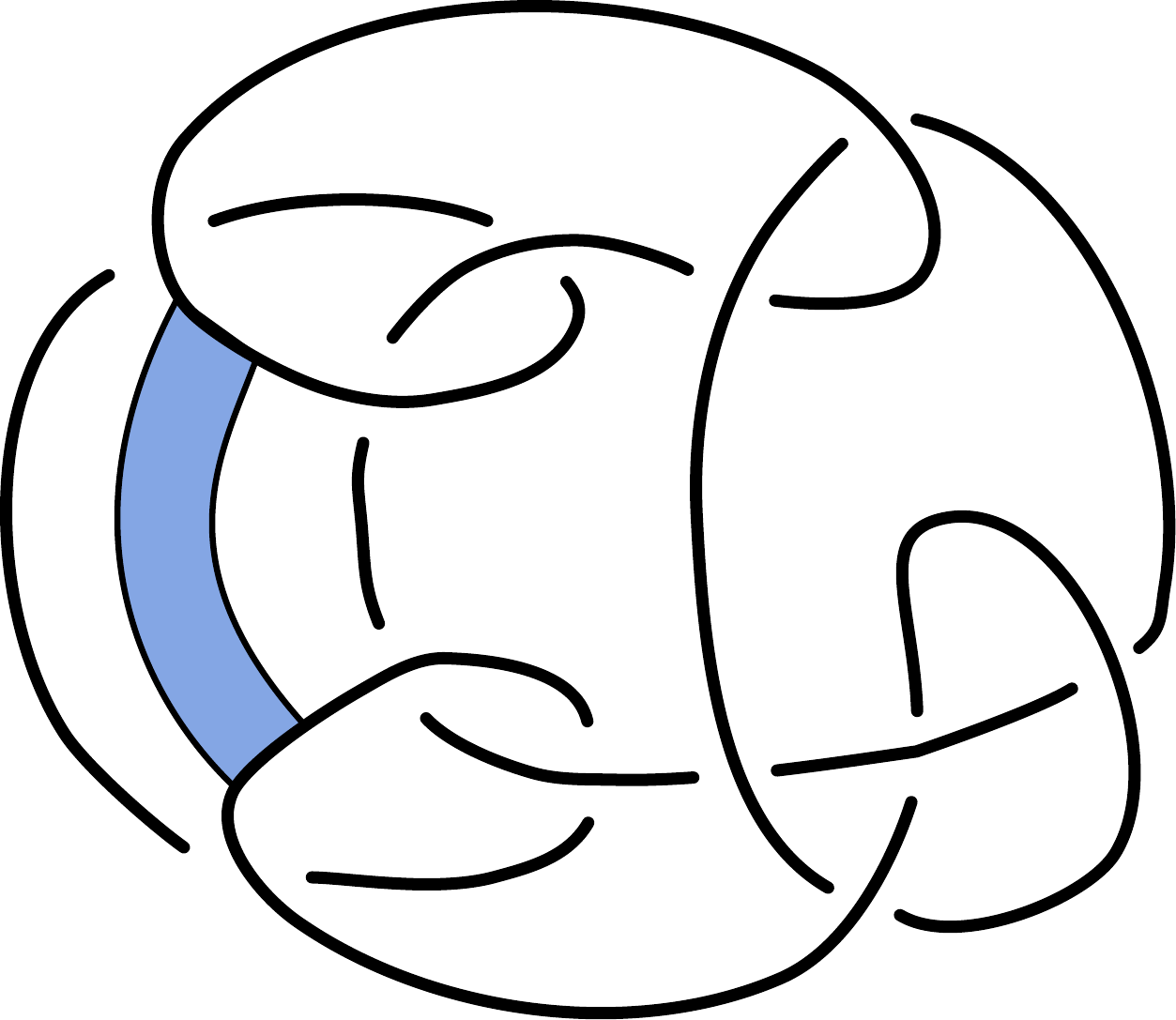}};
    \node[inner sep=0pt] at (11,-16)
    {\includegraphics[width=.30\textwidth]{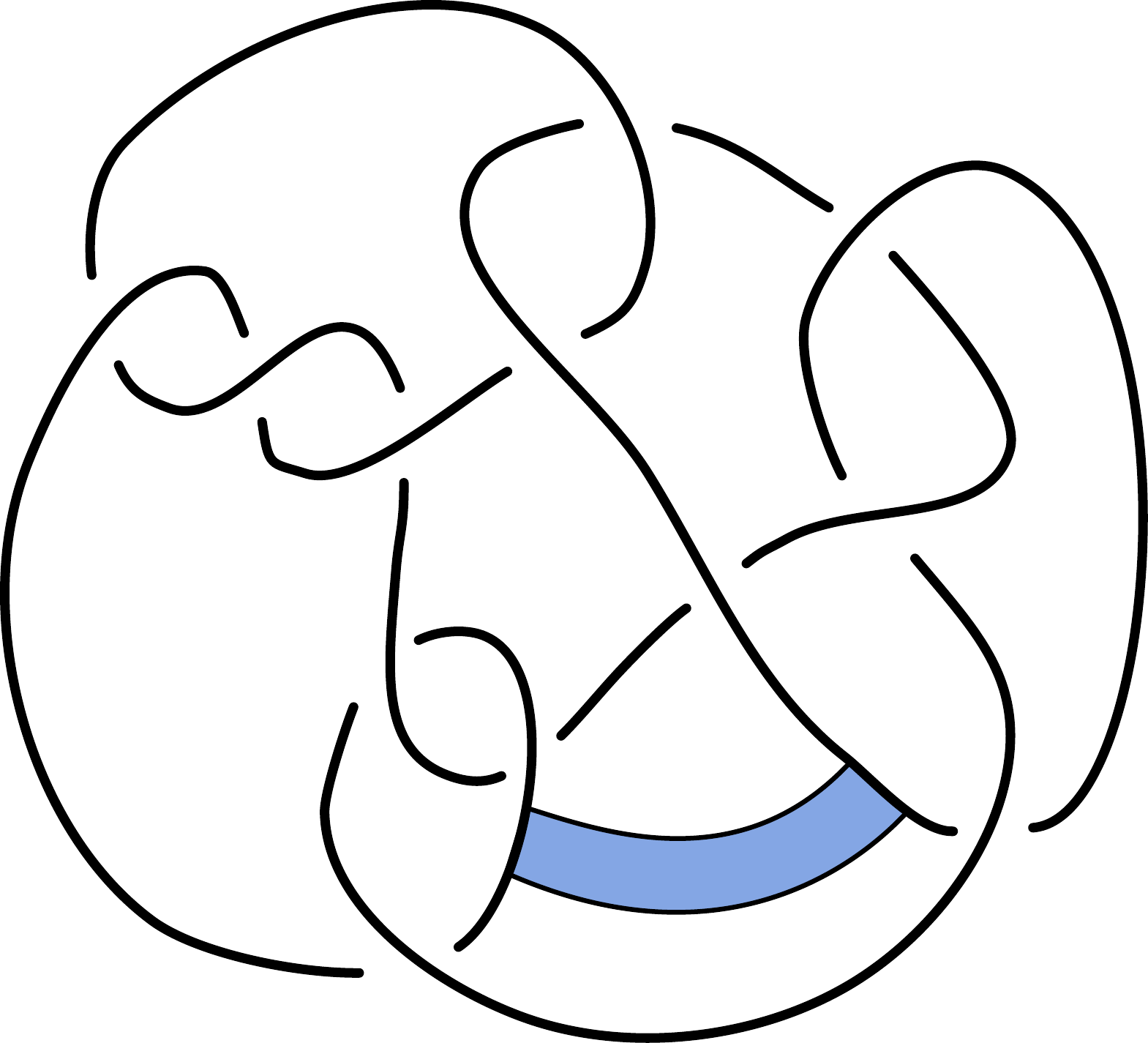}};

                  \node[inner sep=0pt] at (0,-18.4) {$12n_{56}$};
                \node[inner sep=0pt] at (5.5,-18.4) {$12n_{57}$};
                        \node[inner sep=0pt] at (11,-18.4) {$12n_{62}$};

    \end{tikzpicture}
    \end{center}
  \caption{\label{fig:1} Band moves for the proof of \fullref{prop:bandmoves}.}
  \end{figure}
    \begin{figure}
  \begin{tikzpicture}

      \node[inner sep=0pt] at (0,0)
    {\includegraphics[width=.31\textwidth]{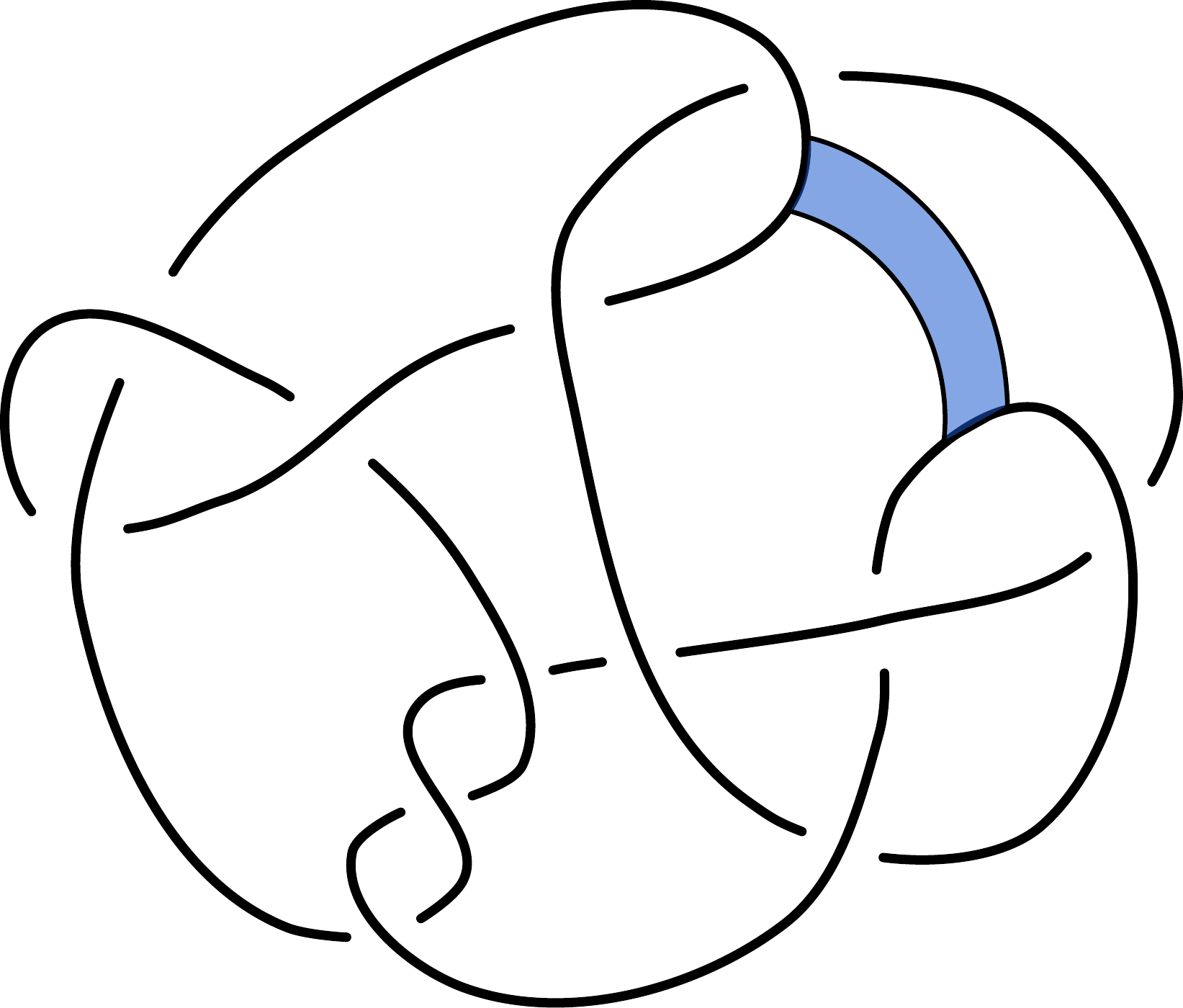}};
    \node[inner sep=0pt] at (5.5,0)
    {\includegraphics[width=.27\textwidth]{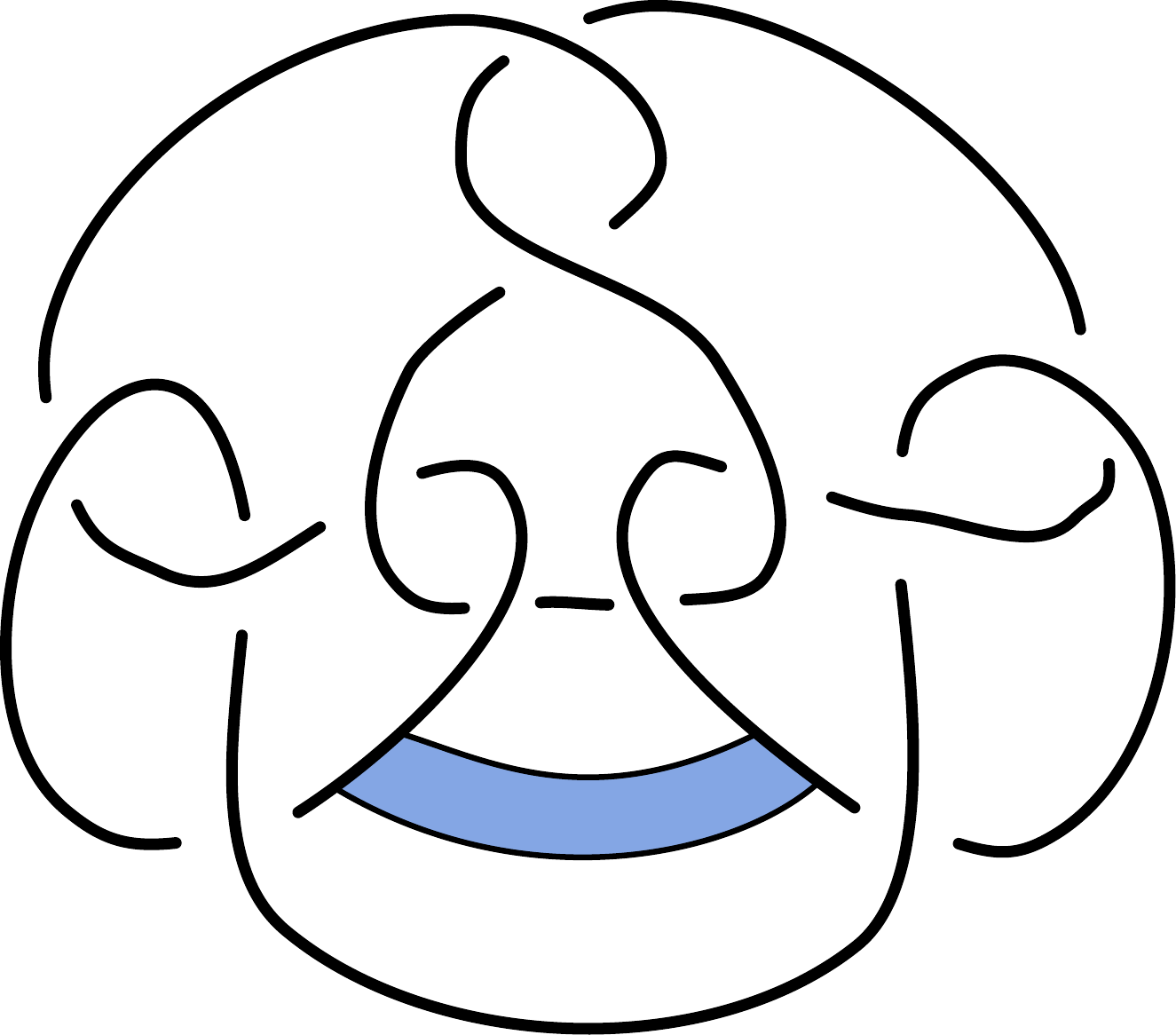}};
    \node[inner sep=0pt] at (11,0)
    {\includegraphics[width=.25\textwidth]{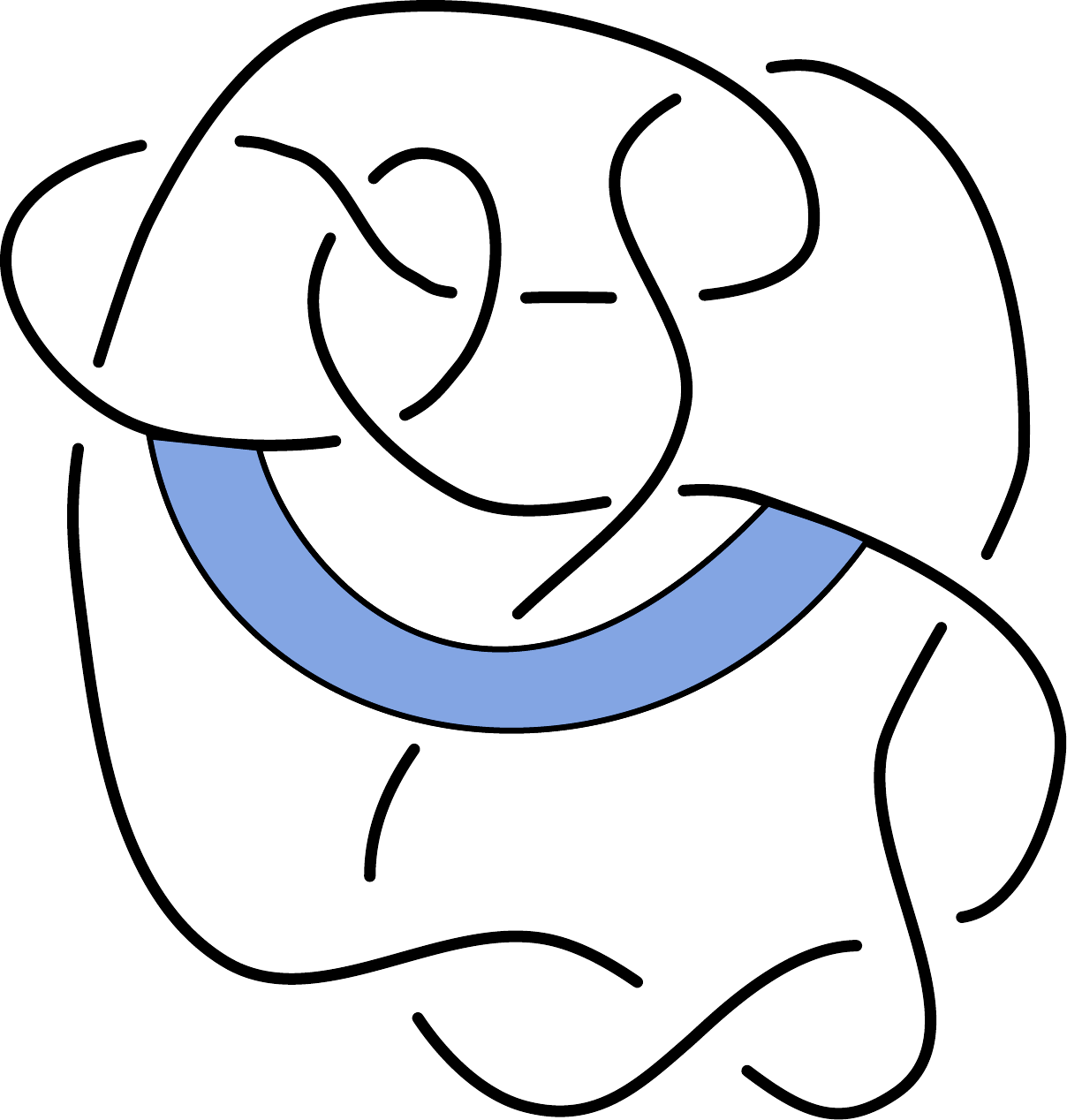}};

            \node[inner sep=0pt] at (0,-2.5) {$12n_{66}$};
                \node[inner sep=0pt] at (5.5,-2.5) {$12n_{87}$};
                        \node[inner sep=0pt] at (11,-2.5) {$12n_{106}$};

     \node[inner sep=0pt] at (0,-5.5)
    {\includegraphics[width=.32\textwidth]{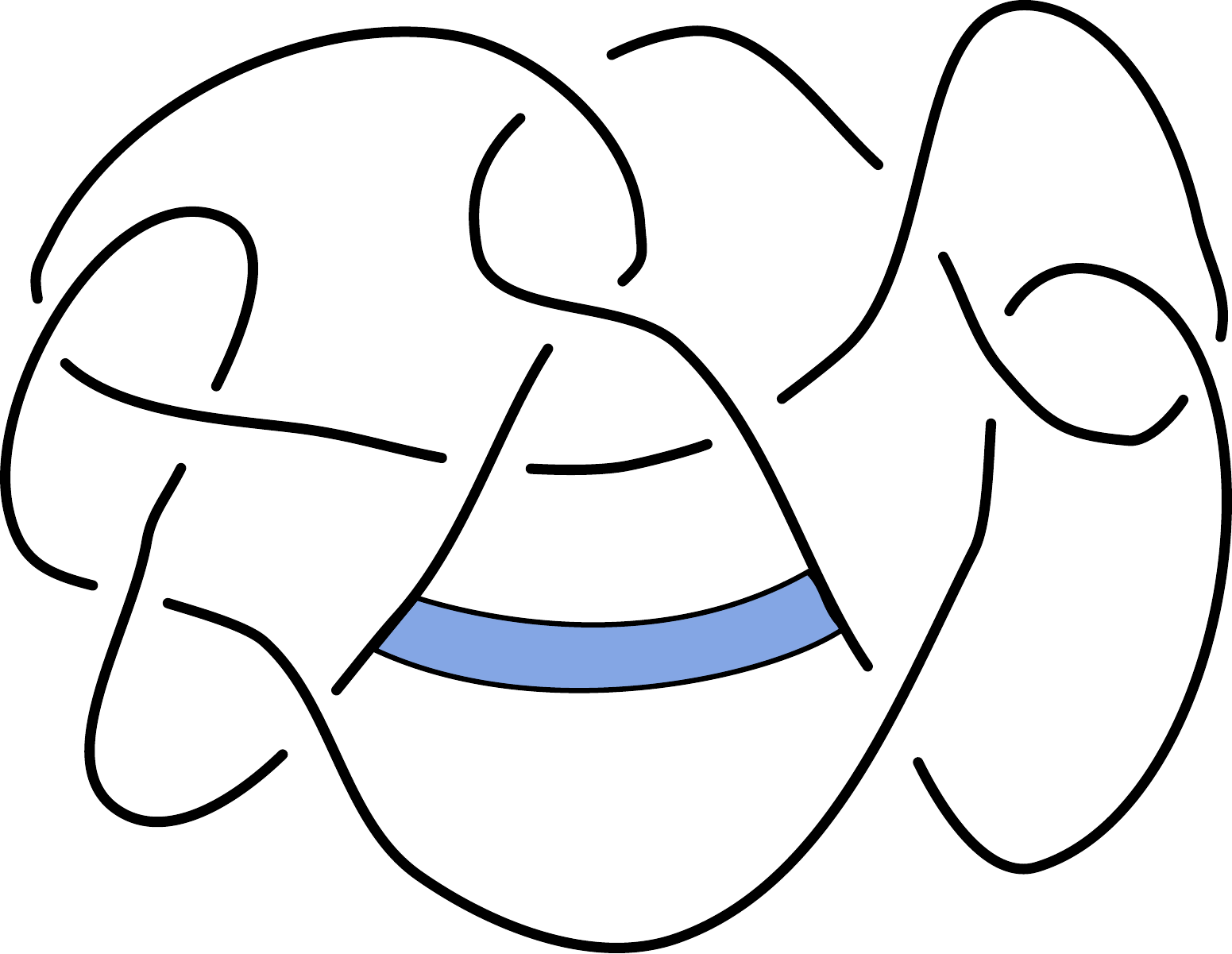}};
    \node[inner sep=0pt] at (5.5,-5.5)
    {\includegraphics[width=.25\textwidth]{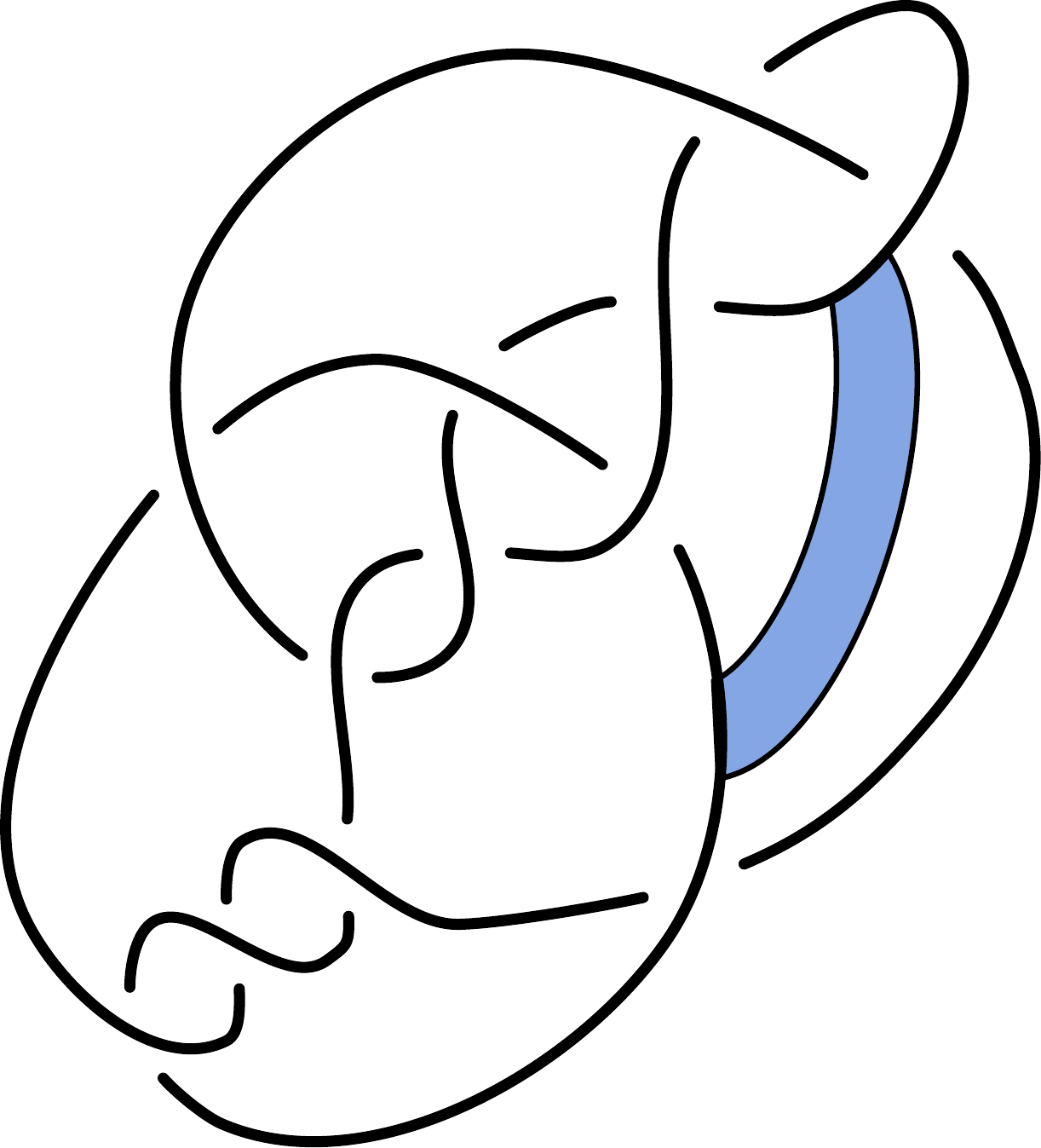}};
    \node[inner sep=0pt] at (11,-5.5)
    {\includegraphics[width=.31\textwidth]{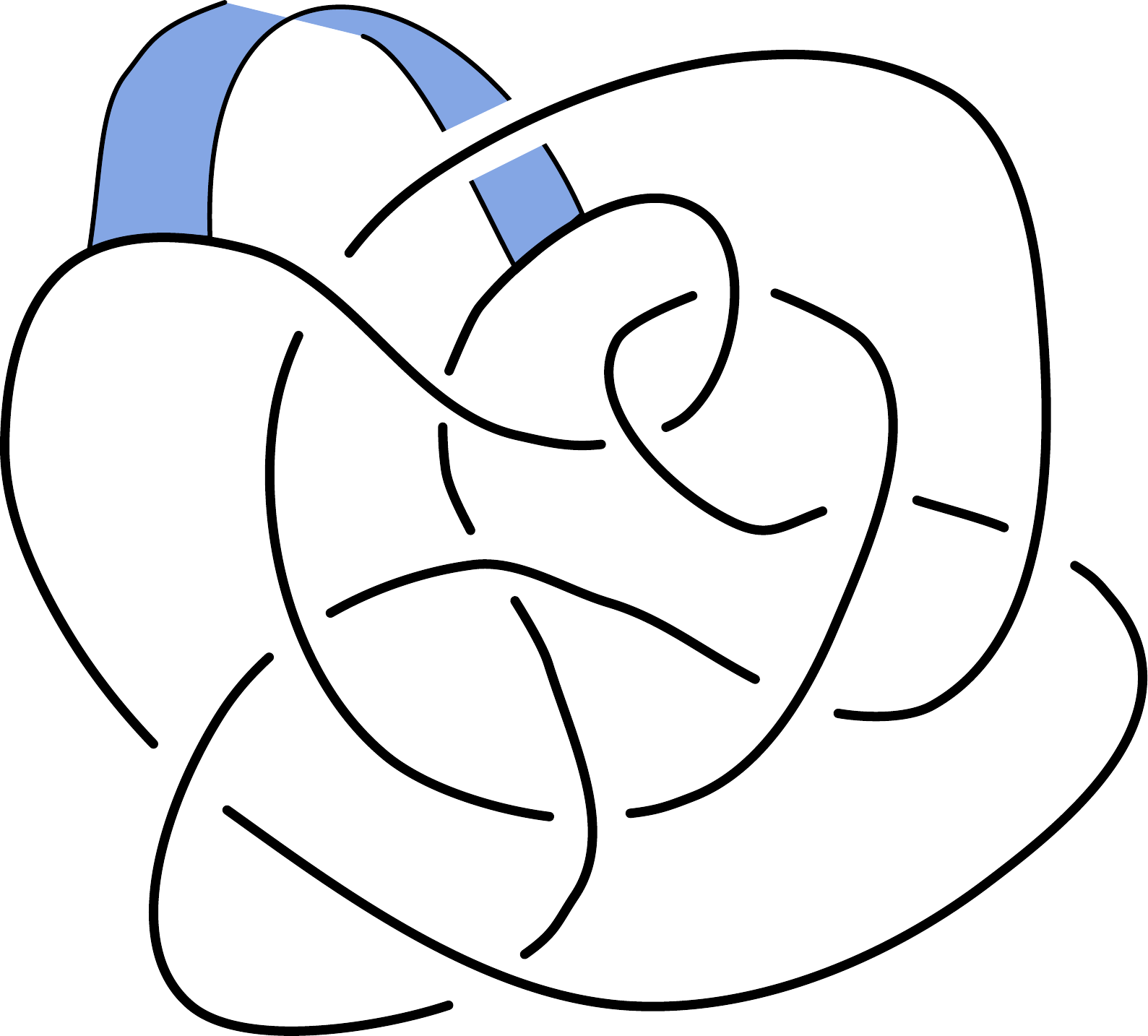}};

              \node[inner sep=0pt] at (0,-8) {$12n_{288}$};
                \node[inner sep=0pt] at (5.5,-8) {$12n_{501}$};
                        \node[inner sep=0pt] at (11,-8) {$12n_{504}$};

         \node[inner sep=0pt] at (0,-11)
    {\includegraphics[width=.3\textwidth]{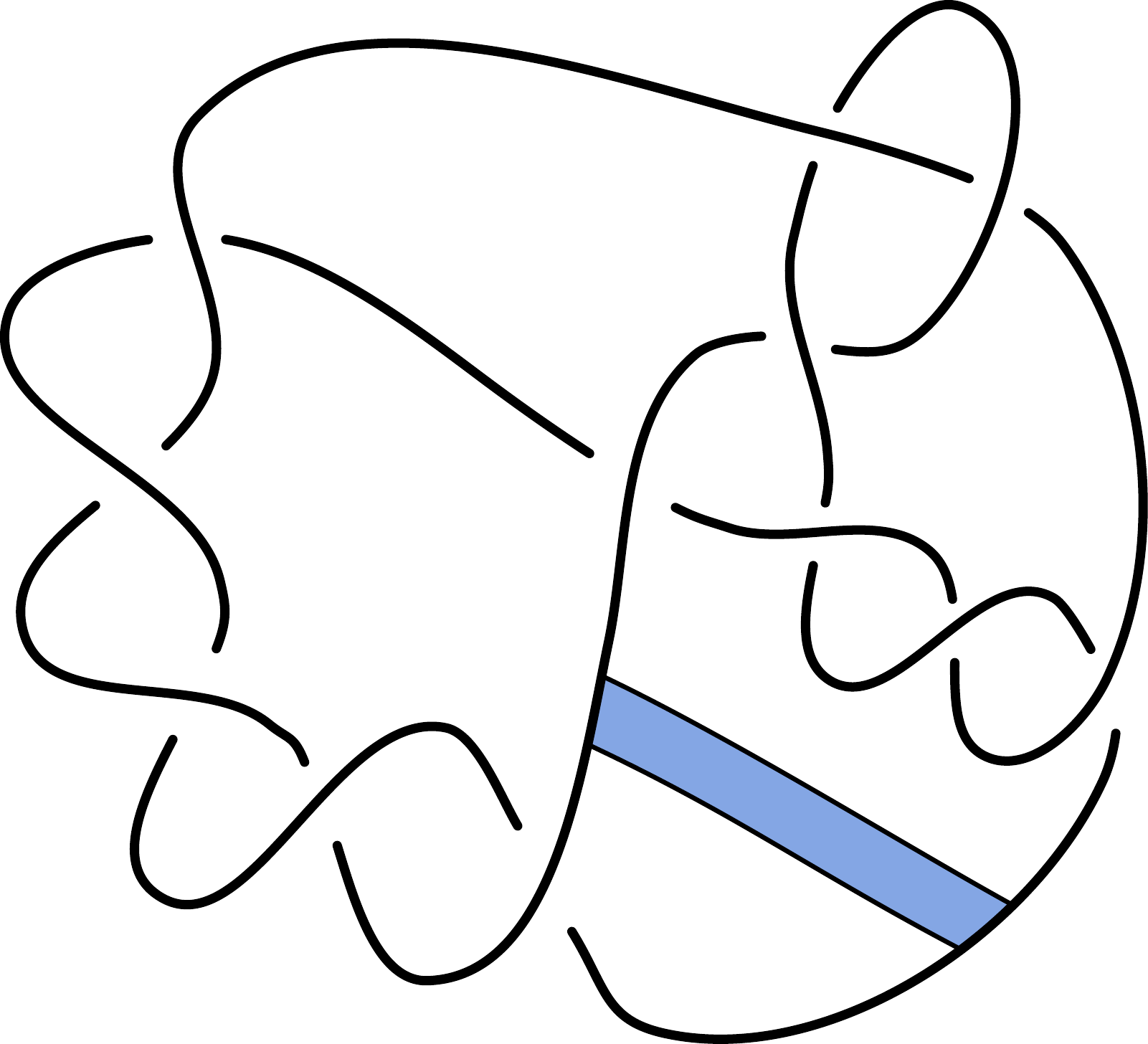}};
    \node[inner sep=0pt] at (5.5,-11)
    {\includegraphics[width=.3\textwidth]{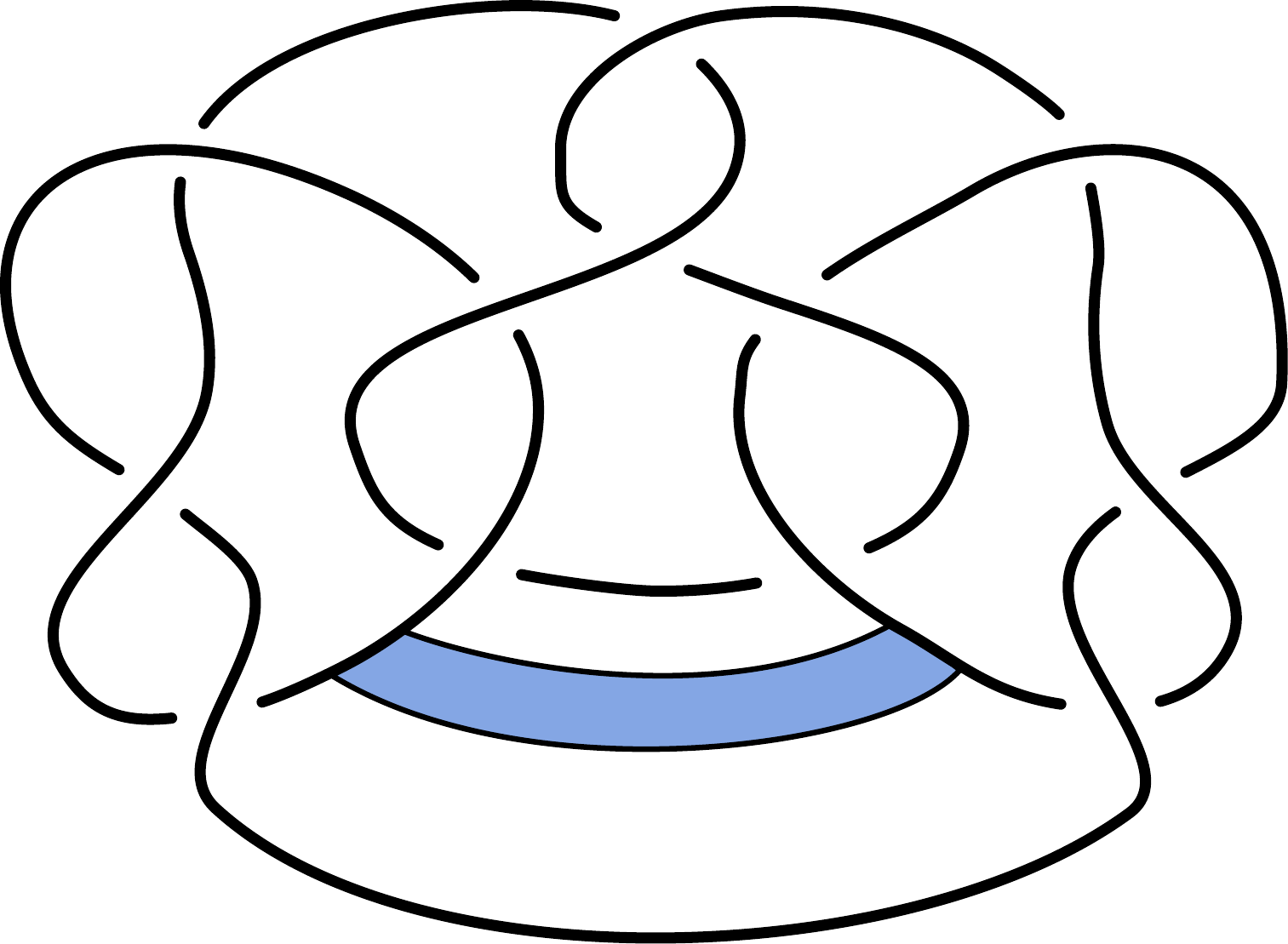}};
    \node[inner sep=0pt] at (11,-11)
    {\includegraphics[width=.29\textwidth]{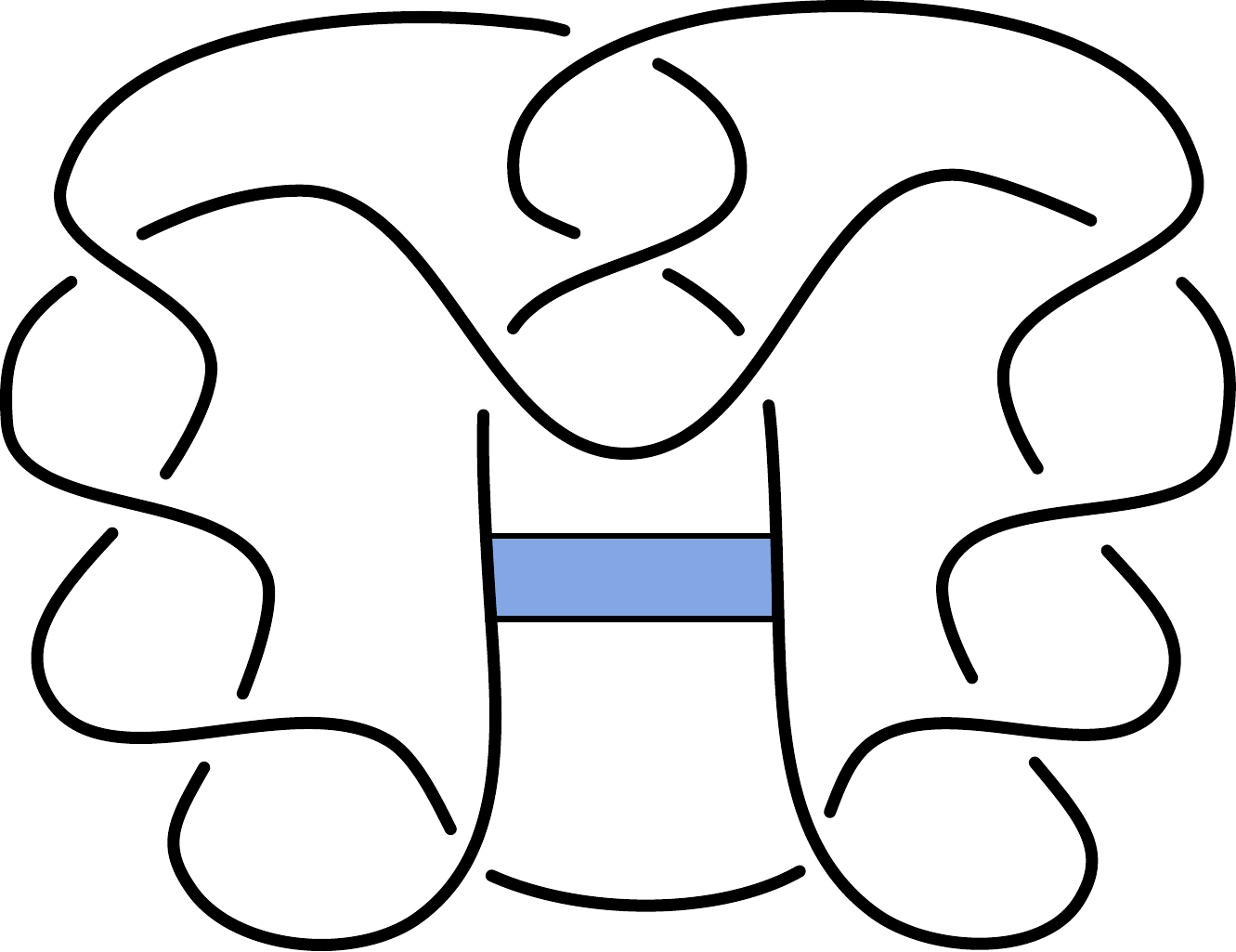}};

                  \node[inner sep=0pt] at (0,-13.4) {$12n_{582}$};
                \node[inner sep=0pt] at (5.5,-13.4) {$12n_{670}$};
                        \node[inner sep=0pt] at (11,-13.4) {$12n_{721}$};

    \end{tikzpicture}
  \caption{\label{fig:2} More band moves for the proof of \fullref{prop:bandmoves}.}
  \end{figure}
  \end{proof}

\bibliographystyle{gtart}
\bibliography{doubly-slice-genus}

\end{document}